\theoremstyle{plain}
    \newtheorem{thm}{Theorem}[section]
    \newtheorem{lem}[thm]   {Lemma}
    \newtheorem{cor}[thm]   {Corollary}
    \newtheorem{prop}[thm]  {Proposition}
\theoremstyle{definition}
    \newtheorem{defn}[thm]  {Definition}
    \newtheorem{ex}[thm]{Example}
    \newtheorem{rem}[thm]{Remark}
\def\secat{\mathsf{secat}}
\def\dim{\mathrm{dim}}
\newcommand{\be}{\begin{enumerate}}
\newcommand{\ee}{\end{enumerate}}
\newcommand{\R}{\mathbb{R}}
\newcommand{\Z}{\mathbb{Z}}
\newcommand{\TC}{{\sf TC}}
\begin{document}

\title[Symmetrized TC]{Symmetrized topological complexity}

\author{Mark Grant}

\address{Institute of Mathematics,
Fraser Noble Building,
University of Aberdeen,
Aberdeen AB24 3UE,
UK}

\email{mark.grant@abdn.ac.uk}

\date{\today}

\keywords{Topological complexity, topological robotics, equivariant homotopy theory, symmetric products}
\subjclass[2010]{55S40, 55P91 (Primary); 55R91, 55M99, 55S15, 68T40 (Secondary).}

\begin{abstract} We present upper and lower bounds for symmetrized topological complexity $\TC^\Sigma(X)$ in the sense of Basabe--Gonz\'alez--Rudyak--Tamaki. The upper bound comes from equivariant obstruction theory, and the lower bounds from the cohomology of the symmetric square $SP^2(X)$. We also show that symmetrized topological complexity coincides with its monoidal version, where the path from a point to itself is required to be constant. Using these results, we calculate the symmetrized topological complexity of all odd spheres.
\end{abstract}


\maketitle
\section{Introduction}\label{sec:intro}

Topological complexity is a numerical homotopy invariant defined by Farber as part of his topological approach to the robot motion planning problem \cite{Far03,Far04}. It can be defined as the sectional category (or Schwarz genus \cite{Schwarz}) of the free path fibration. More precisely, let $X$ be a path-connected space, and let $PX$ denote the space of paths in $X$ with the compact-open topology. The evaluation map
\[
\pi:PX\to X\times X,\qquad \pi(\gamma)=\left(\gamma(0),\gamma(1)\right)
\]
is a fibration, which admits a (global) section if and only if $X$ is contractible. The topological complexity of $X$, denoted $\TC(X)$, is defined to be the minimal integer $k$ such that $X\times X$ admits a cover $U_0, U_1,\ldots , U_k$ by open sets, on each of which $\pi$ admits a local section $\sigma_i:U_i\to PX$. (Note that we employ the normalization convention that categorical invariants are one less than the number of sets in the cover.)

The local sections $\sigma_i:U_i\to PX$ appearing in the definition are called motion planners, because they assign to each $(A,B)\in U_i$ a path in $X$ from $A$ to $B$. One might wish to impose additional, natural constraints on the motion planners, such as that the motion from $A$ to $A$ be constant, or that the motion from $B$ to $A$ be the reverse of the motion from $A$ to $B$. This motivates the definition of \emph{symmetric topological complexity}, given by Farber and Grant in \cite{FG06}. Removing the diagonal $\Delta X\subseteq X\times X$ and its pre-image under $\pi$, one obtains a fibration
\[
\pi':P'X\to F(X,2),
\]
where $P'X$ (the space of paths with distinct endpoints) and $F(X,2)$ (the two-point ordered configuration space on $X$) each carry a free $\Z/2$ action---one by reversing paths, the other by transposition. On passing to orbit spaces one obtains a fibration
\[
\pi'':P'X/\Z/2\to B(X,2)
\]
over the unordered configuration space. Then the symmetric topological complexity $\TC^S(X)$ is defined to be the sectional category of $\pi''$, plus one (to account for the points on the diagonal, where the motion planner must remain constant).

There are admittedly several drawbacks to this definition. Firstly, it is not clear that treating (a neighbourhood of) the diagonal as a subset on its own is the most efficient way to motion plan, even when motions from a point to itself are required to be constant. Secondly, there is the intrinsic difficulty of working with the configuration spaces $B(X,2)$, whose cohomology is not yet fully understood. Thirdly, and perhaps most significantly, $\TC^S(X)$ is not a homotopy invariant---although the only known example illustrating this fact is the rather artificial case of a contractible space which is not a point (see \cite[Example 7]{FG06}).

Many of these difficulties are overcome if one uses a modified definition due to Basabe, Gonz\'alez, Rudyak and Tamaki \cite{BGRT}. Namely, consider $\pi:PX\to X\times X$ as a $\Z/2$-equivariant map, and let $\TC^\Sigma(X)$ denote the minimal integer $k$ such that $X\times X$ admits a cover $U_0, U_1,\ldots , U_k$ by invariant open sets, on each of which $\pi$ admits an equivariant local section. (The authors of \cite{BGRT} in fact define a higher version of this invariant, associated with the iterated diagonal $\Delta_m:X\to X^m$; we focus mainly on the $m=2$ case of their definition.) Following a suggestion of J.\ Gonz\'alez, we refer to this invariant as \emph{symmetrized topological complexity}, reserving the name symmetric topological complexity for the Farber--Grant version. It is immediate that $\TC(X)\le \TC^\Sigma(X)$, and straightforward to show that $\TC^\Sigma(X)$ is homotopy invariant. With a little more work, it can be shown \cite[Proposition 4.2]{BGRT} that
\[
\TC^S(X)-1\le \TC^\Sigma(X)\le \TC^S(X).
\]
The authors of \cite{BGRT} left open the question of whether the first inequality can be an equality for non-contractible spaces, or the determination of $\TC^\Sigma(X)$ for well-known manifolds, notably odd spheres.

The goal of the current paper is to begin a systematic study of $\TC^\Sigma(X)$, by placing it in the framework of equivariant sectional category (as introduced by Colman and Grant in \cite{CG}). This allows the use of equivariant obstruction theory to give the following upper bound, entirely analogous to the upper bound for ordinary topological complexity given in \cite[Theorem 5.2]{Far04}.

\newtheorem*{thm:upper}{Theorem \ref{thm:upper}}
\begin{thm:upper}
Let $X$ be an $s$-connected polyhedron. Then
\[
\TC^\Sigma(X)< \frac{2\,\dim\,X+1}{s+1}.
\]
\end{thm:upper}

One potential perceived drawback of the definition of $\TC^\Sigma(X)$ is that it lacks the condition that the motion from $A$ to $A$ be constant. If we add this condition in, we arrive at a definition of \emph{monoidal symmetrized topological complexity}, denoted $\TC^{M,\Sigma}(X)$, see Definition \ref{def:MSTC} below. This is a symmetrized version of the monoidal topological complexity $\TC^M(X)$ of Iwase and Sakai \cite{IS1}. Those authors conjectured that $\TC^M(X)=\TC(X)$ for locally finite simplicial complexes (see \cite{IS2}; the paper \cite{IS1} presents a proof which was found to contain an error). In the symmetrized case we are able to show that the two notions coincide on a large class of spaces.

\newtheorem*{thm:MSvsS}{Theorem \ref{thm:MSvsS}}
\begin{thm:MSvsS}
Let $X$ be a paracompact ENR. Then $\TC^{M,\Sigma}(X)=\TC^\Sigma(X)$.
\end{thm:MSvsS}

Our motivation for introducing $\TC^{M,\Sigma}(X)$ and proving Theorem \ref{thm:MSvsS} is to give cohomological lower bounds for $\TC^\Sigma(X)$ in terms of relative cohomology. Recall that the symmetric square $SP^2(X)$ of $X$ is the orbit space of $X\times X$ under the involution given by transposing coordinates. Let $dX\subseteq SP^2(X)$ denote the image of the diagonal $\Delta X\subseteq X\times X$.

\newtheorem*{thm:lowerSP}{Theorem \ref{thm:lowerSP}}
\begin{thm:lowerSP}
 Suppose there are classes $x_1,\ldots ,x_k\in H^*(SP^2(X))$ (with arbitrary coefficients) such that:
 \be
 \item $x_i$ restricts to zero in $H^*(dX)$ for $i=1,\ldots, k$;
 \item $0\neq x_1\cdots x_k \in H^*(SP^2(X))$.
 \ee
 Then $\TC^\Sigma(X)\ge k$.
 \end{thm:lowerSP}

\newtheorem*{thm:lowerMonoidal}{Theorem \ref{thm:lowerMonoidal}}
\begin{thm:lowerMonoidal}
Suppose there are relative classes $x_1,\ldots ,x_k\in H^*(SP^2(X),dX)$ (with arbitrary coefficients) such that
$$0\neq x_1\cdots x_k \in H^*(SP^2(X),dX).$$ Then $\TC^{M,\Sigma}(X)\ge k$.
 \end{thm:lowerMonoidal}

 Our test spaces for these results are odd spheres. Nakaoka has made extensive computations of the cohomology rings of cyclic and symmetric powers (see \cite{Nak56} for instance), particularly of spheres. The symmetric square $SP^2(S^1)$ is homeomorphic to a (compact) M\"obius band. Letting $X=S^n$ and applying Theorem \ref{thm:lowerSP} (in the case $n>1$) and Theorem \ref{thm:lowerMonoidal} (in the case $n=1$), we obtain the following result.

 \newtheorem*{thm:spheres}{Theorem \ref{thm:spheres}}
 \begin{thm:spheres}
 We have $\TC^\Sigma(S^n)=2$ for all $n\ge 1$.
 \end{thm:spheres}

 We remark that the case $n$ even is already contained in \cite[Example 4.5]{BGRT}. Also, Don Davis \cite{Davis} has recently proved that $\TC^\Sigma(S^1)=2$ by elementary methods (using results from general topology).

 It should be noted that other cohomological lower bounds for $\TC^\Sigma(X)$ are available, using various flavours of equivariant cohomology. Indeed, if $h^*$ is any $\Z/2$-equivariant cohomology theory with products, then the cup-length of the kernel of $h^*(X\times X)\to h^*(\Delta X)$ provides a lower bound for $\TC^\Sigma(X)$. We have found ordinary Borel equivariant cohomology (with untwisted coefficients) insufficient for proving Theorem \ref{thm:spheres}. In theory, Bredon cohomology should be the right tool for proving lower bounds (as upper bounds); in practice, however, the difficulty of computing cup products in Bredon cohomology renders this impractical. In the end, the cohomology of symmetric squares proved to be the right setting for proving Theorem \ref{thm:spheres}, and more besides. For instance, Jes\'us Gonz\'alez has used calculations in the cohomology of symmetric squares of real projective spaces to calculate $\TC^\Sigma(\R P^m)$ when $m$ is a $2$-power \cite{Gonzalez}.

 The contents of the paper are as follows. In Section \ref{sec:prelim} we collect several results about $G$-fibrations and $G$-cofibrations which we will need in the sequel. In Section \ref{sec:Gsecat} we generalize various results of Schwarz \cite{Schwarz} to the equivariant setting, including a general dimension-connectivity upper bound for equivariant sectional category which we employ in Section \ref{sec:SymmTC} to prove Theorem \ref{thm:upper}. In the same section we prove the lower bound Theorem \ref{thm:lowerSP}. Section \ref{sec:MonoidalSymmTC} introduces monoidal symmetrized topological complexity and proves Theorems \ref{thm:MSvsS} and \ref{thm:lowerMonoidal}. The preceding results are applied in Section \ref{sec:spheres} to calculate symmetrized topological complexity of spheres. Finally, in Section \ref{sec:higher} we indicate which of our results generalize to symmetrized higher topological complexity, leaving open the question of determining the symmetrized higher topological complexity of odd spheres.

 We wish to thank Don Davis and Jes\'us Gonz\'alez for useful discussions and for sharing with us preliminary versions of their articles \cite{Davis} and \cite{Gonzalez}. We also thank Yuli Rudyak for suggesting we include higher versions of our results.

\section{\texorpdfstring{Preliminaries on $G$-fibrations and $G$-cofibrations}{Preliminaries on G-fibrations and G-cofibrations}}\label{sec:prelim}

Here we recall several facts about $G$-fibrations and $G$-cofibrations. Often the proofs are straight-forward generalizations of the corresponding non-equivariant results, hence are omitted. We work in a category of spaces convenient for doing homotopy theory, such as compactly-generated weak Hausdorff spaces.

In this section $G$ will denote a general topological group, and we use standard terminology and notation from equivariant topology, such as $G$-space, $G$-map, $G$-homotopy and so forth. For instance if $H\le G$ is a closed subgroup, then a $G$-map $f:X\to Y$ induces a map $f^H:X^H\to Y^H$ of $H$-fixed point spaces.

By a \emph{(Serre) $G$-fibration} we will mean a $G$-map $p:E\to B$ having the $G$-homotopy lifting property with respect to all $G$-spaces ($G$-CW complexes).

\begin{lem}[{Compare \cite[\S III.4]{Bredon2}, \cite[15.3]{Lueck}}]\label{lem:SerreGfibr}
If $G$ is a compact Lie group, then $p:E\to B$ is a Serre $G$-fibration if and only if $p^H:E^H\to B^H$ is a Serre fibration for all subgroups $H\le G$.
\end{lem}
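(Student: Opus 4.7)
The plan is to prove both directions via the standard adjunction between $G$-maps out of $G/H \times X$ (with $H$ acting trivially on $X$) and ordinary maps into the $H$-fixed point set. Concretely, given a $G$-space $Y$ and any space $X$, there is a natural bijection between $G$-maps $G/H \times X \to Y$ and maps $X \to Y^H$, obtained by restricting to $\{eH\} \times X$. Since $G$ is a compact Lie group and $H \le G$ is closed, $G/H$ is a smooth compact manifold with a canonical $G$-CW structure, and products $G/H \times D^n$ (with $D^n$ having a CW structure) are $G$-CW complexes with relative cells exactly the standard equivariant cells.

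For the forward implication, suppose $p$ is a Serre $G$-fibration and fix a closed subgroup $H \le G$. A homotopy lifting problem for $p^H$ against a CW complex $Y$ consists of maps $f \co Y \to E^H$ and $F \co Y \times I \to B^H$ with $F(-,0) = p^H \circ f$. Via the adjunction these correspond to $G$-maps $\tilde f \co G/H \times Y \to E$ and $\tilde F \co G/H \times Y \times I \to B$ fitting into a $G$-equivariant lifting problem. Because $G/H \times Y$ is a $G$-CW complex, the $G$-HLP produces an equivariant lift, and applying the adjunction once more yields the desired non-equivariant lift $Y \times I \to E^H$.

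For the reverse implication, I would show that $p$ has the $G$-HLP against every $G$-CW complex by inducting over the equivariant cell structure. It suffices to verify the $G$-HLP for each relative $G$-cell $(G/H \times D^n,\, G/H \times S^{n-1})$, since gluing lifts cell-by-cell proceeds exactly as in the non-equivariant proof. A $G$-equivariant lifting square with source $G/H \times D^n$ (and initial data on $G/H \times S^{n-1} \cup G/H \times D^n \times \{0\}$) corresponds under the adjunction to an ordinary lifting square for $p^H \co E^H \to B^H$ against the pair $(D^n, S^{n-1})$. By hypothesis $p^H$ is a Serre fibration, so this lifting problem is solvable, and the solution re-adjoints to the required $G$-equivariant lift.

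The main technical point to handle carefully is the inductive step combining lifts across the $G$-cells of a $G$-CW complex, and in particular verifying that the adjunction is compatible with the subspace inclusions $G/H \times S^{n-1} \hookrightarrow G/H \times D^n$; this is where the compact Lie assumption is used, since it guarantees that orbits $G/H$ admit the requisite $G$-CW structure and that the product $G/H \times D^n$ decomposes as a $G$-CW complex. The rest of the argument is formal adjunction bookkeeping, mirroring the classical proof of the corresponding non-equivariant characterization of Serre fibrations.
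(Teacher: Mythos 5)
Your proposal is correct and is precisely the standard adjunction argument (between $G$-maps out of $G/H\times Y$ and maps out of $Y$ into $H$-fixed points, combined with cell-by-cell induction over the equivariant skeleta) that the paper's cited references \cite{Bredon2}, \cite{Lueck} use; the paper itself omits the proof and defers to those sources. The only cosmetic remarks are that ``all subgroups'' may be replaced by ``all closed subgroups'' since $E^H=E^{\overline{H}}$ for a continuous action, and that $G/H$ is tautologically a $G$-CW complex with a single cell $G/H\times D^0$, so the compact Lie hypothesis enters mainly through the local compactness of $G/H$ needed for the continuity of the adjunction rather than through Illman's theorem.
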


Given a $G$-space $B$, there is a natural action of $G$ on the space $PB$ of paths in $B$ (with the compact-open topology). The evaluation map $\mathrm{ev}_0:PB\to B$ given by $\mathrm{ev}_0(\gamma)=\gamma(0)$ is equivariant. For any $G$-map $p:E\to B$, the pullback
\[
E\times_B PB = \{(e,\gamma)\in E\times PB \mid p(e)=\gamma(0)\}
\]
is again a $G$-space in a natural way. Define a \emph{$G$-lifting function} for $p:E\to B$ to be a $G$-map $\lambda_p:E\times_B PB\to PE$ such that $p\circ \lambda_p(e,\gamma)=\gamma$ for all $(e,\gamma)\in E\times_B PB$.

\begin{lem}[{Compare \cite[Theorem 2.7.8]{Spa}}]\label{lem:HurGfibr}
A $G$-map $p:E\to B$ is a $G$-fibration if and only if it admits a $G$-lifting function.
\end{lem}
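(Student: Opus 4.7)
The plan is to follow the classical Hurewicz-fibration argument (Spanier, Theorem 2.7.8) verbatim, while verifying $G$-equivariance at each step. The key observation throughout is that the pointwise $G$-action $(g\cdot\gamma)(t)=g\cdot\gamma(t)$ on $PB$ makes $\mathrm{ev}_0:PB\to B$ equivariant and turns the pullback $E\times_B PB$ into a $G$-space, and that adjunctions in a convenient category of spaces preserve equivariance when $I$ carries the trivial $G$-action.

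For the ``only if'' direction, suppose $p:E\to B$ is a $G$-fibration. Apply the $G$-HLP to the tautological data on $Y=E\times_B PB$: take $f:Y\to E$ to be the projection $(e,\gamma)\mapsto e$ and $H:Y\times I\to B$ to be $H(e,\gamma,t)=\gamma(t)$. Both maps are $G$-equivariant (with trivial action on $I$) and satisfy $p\circ f=H\circ i_0$. A $G$-equivariant lift $\tilde H:Y\times I\to E$ then yields the required $G$-lifting function by $\lambda_p(e,\gamma)(t):=\tilde H(e,\gamma,t)$; equivariance of $\lambda_p$ follows directly from that of $\tilde H$ together with the pointwise $G$-action on $PE$.

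For the ``if'' direction, suppose a $G$-lifting function $\lambda_p$ exists, and let $f:Y\to E$ and $H:Y\times I\to B$ be equivariant with $p\circ f=H\circ i_0$. Define $\phi:Y\to E\times_B PB$ by $\phi(y)=(f(y),H_y)$ where $H_y(t)=H(y,t)$. The map $\phi$ is continuous by the exponential law, and equivariant since
\[
H_{g\cdot y}(t)=H(g\cdot y,t)=g\cdot H(y,t)=(g\cdot H_y)(t).
\]
Then $\tilde H(y,t):=\lambda_p(\phi(y))(t)$ is an equivariant solution to the $G$-HLP problem, since $\tilde H(y,0)=\lambda_p(f(y),H_y)(0)=f(y)$ and $p\circ\tilde H(y,t)=H_y(t)=H(y,t)$.

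There is no real obstacle; the only point requiring care is the continuity and $G$-equivariance of the adjoint constructions $y\mapsto H_y$ and $(e,\gamma,t)\mapsto\gamma(t)$, which rely on the convenient category of spaces adopted at the start of this section and on the trivial action on $I$. Everything else is bookkeeping.
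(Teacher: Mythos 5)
Your proof is correct and is precisely the intended argument: the paper omits this proof as a routine equivariant upgrade of Spanier's Theorem 2.7.8, and your equivariantization of the tautological lifting problem on $E\times_B PB$ is exactly that upgrade. One minor remark: in the ``if'' direction you use the initial condition $\lambda_p(e,\gamma)(0)=e$, which the paper's stated definition of a $G$-lifting function omits (it only demands $p\circ\lambda_p(e,\gamma)=\gamma$) but which is part of the standard definition and is automatically satisfied by the $\lambda_p$ your ``only if'' construction produces.
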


 By a \emph{$G$-cofibration} we will mean an inclusion of $G$-spaces $i:A\hookrightarrow X$ satisfying the $G$-homotopy extension property. The next result gives several alternative characterisations of $G$-cofibrations. Recall that a pair $(X,A)$ of $G$-spaces is called a \emph{$G$-NDR pair} if there exists a $G$-neighbourhood $U$ of $A$ in $X$ which is deformable rel $A$ into $A$ (meaning there exists a $G$-homotopy $H:U\times I\to X$ satisfying $H(x,0)=x$, $H(a,t)=a$ and $H(x,1)\in A$ for all $x\in U$, $a\in A$, $t\in I$), and a $G$-map $\varphi:X\to I$ such that $\varphi^{-1}(0)=A$ and $\varphi(X-U)=1$.

\begin{lem}[Compare \cite{Strom}]\label{lem:Gcofibr}
Let $i:A\hookrightarrow X$ be an inclusion of $G$-spaces. The following are equivalent:
\begin{enumerate}
\item $i$ is a $G$-cofibration;
\item The pair $(X,A)$ is a $G$-NDR pair;
\item There exists an equivariant retraction $r:X\times I \to X\times\{0\}\cup A\times I$, where $G$ acts trivially on $I$ and diagonally on $X\times I$.
\end{enumerate}
\end{lem}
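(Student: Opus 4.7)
The plan is to generalize Str\o m's classical proof \cite{Strom} to the equivariant setting. At each step we verify that constructions which are naturally phrased for topological spaces produce $G$-equivariant output when fed $G$-equivariant input. The crucial observation throughout is that $G$ acts trivially on $I$, so that sup, max and arithmetic manipulations of ``time'' coordinates automatically preserve equivariance.

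For (3) $\Rightarrow$ (1): Given an equivariant $f\co X\to Y$ and a $G$-homotopy $h\co A\times I\to Y$ with $h(a,0)=f(a)$, these combine to a $G$-map $F\co X\times\{0\}\cup A\times I\to Y$; composition $F\circ r$ yields an equivariant extension of $h$ starting at $f$, establishing the $G$-homotopy extension property. For (1) $\Rightarrow$ (3): Apply the $G$-HEP in the universal case $Y=X\times\{0\}\cup A\times I$, equipped with the diagonal action (trivial on $I$). The identity on $X\times\{0\}$ together with the equivariant inclusion $A\times I\hookrightarrow Y$ as a homotopy extends to an equivariant map $X\times I\to Y$, which is the desired retraction.

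For (2) $\Rightarrow$ (3): Feed the $G$-NDR data $(\varphi,H)$ into Str\o m's explicit piecewise formula for the retraction $r\co X\times I\to X\times\{0\}\cup A\times I$. Since $\varphi$ is $G$-invariant (a $G$-map to $I$ with trivial action), since $H$ is $G$-equivariant, and since the formula only manipulates $t$ by comparing with $\varphi(x)$ and plugging into $H$, the resulting $r$ is automatically a $G$-map. For (3) $\Rightarrow$ (2): Write $r(x,t)=(r_1(x,t),r_2(x,t))$. Define
\[
\varphi(x)=\sup_{t\in I}\bigl(t-r_2(x,t)\bigr),
\]
which is $G$-invariant because $r_2$, viewed as a component into $I$ (with trivial action), is automatically $G$-invariant when $r$ is equivariant. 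Set $U=\varphi^{-1}([0,1))$, a $G$-neighbourhood of $A=\varphi^{-1}(0)$, and build the rel-$A$ deformation $H$ from $r_1$ exactly as in the non-equivariant argument; equivariance of $r_1$ transfers directly to $H$.

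The only real substance is bookkeeping: one must check that the explicit neighbourhood $U$, the function $\varphi$, and the deformation $H$ produced from a $G$-retraction are all genuinely invariant (resp.\ equivariant), and conversely that Str\o m's retraction formula does not accidentally break equivariance when the NDR data are equivariant. Since every ingredient of these formulas commutes with the trivial action on $I$, no obstacle beyond the non-equivariant case arises, which is why such proofs are often omitted in the literature.
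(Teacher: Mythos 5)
Your argument is correct, and it is precisely the equivariant adaptation of Str\o m's proof that the paper leaves to the reader (the lemma is stated with ``Compare \cite{Strom}'' and its proof is omitted); since $G$ acts trivially on $I$, every formula in the classical argument is equivariant once the input data are, exactly as you say. The one detail worth making explicit in (3) $\Rightarrow$ (2) is that the equality $\varphi^{-1}(0)=A$ (rather than just $\supseteq$) uses that $A$ is closed in $X$ --- automatic in the paper's weak Hausdorff setting, since $X\times\{0\}\cup A\times I$ is a retract --- because $\varphi(x)=0$ gives $r_2(x,t)\ge t>0$ and hence $r_1(x,t)\in A$ for all $t>0$, whence $x=r_1(x,0)\in \overline{A}=A$ by continuity.
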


Note that if $i:A\hookrightarrow X$ is a $G$-cofibration and $X$ is Hausdorff, then $A\subseteq X$ is closed.

\begin{ex}\label{ex:interval}
Let $\hat I$ denote the interval $[0,1]$ with the involution $t\mapsto 1-t$. Then the inclusion $i:\{0,1\}\hookrightarrow \hat I$ is a $\Z/2$-cofibration. An equivariant retraction $r:\hat I\times I\to \hat I\times \{0\}\cup \{0,1\}\times I$ is depicted in Figure \ref{fig:Icofibr}.

\begin{figure}
\begin{tikzpicture}
\draw[fill=gray!60] (0,-1) rectangle (2,1);
\draw[ultra thick] (2,1)--(0,1)--(0,-1)--(2,-1);
\draw[fill] (4,0) circle [radius=0.05];
\draw[dashed] (4,0)--(1.6,1);
\draw[dashed] (4,0)--(0.4,1);
\draw[dashed] (4,0)--(0,.6);
\draw[dashed] (4,0)--(0,0);
\draw[dashed] (4,0)--(0,-.6);
\draw[dashed] (4,0)--(0.4,-1);
\draw[dashed] (4,0)--(1.6,-1);
\draw[ultra thick, <->] (-.2,-.7) to [out=120,in=240] (-.2,.7);
\end{tikzpicture}
\caption{An equivariant retraction $r:\hat I\times I\to \hat{I}\times\{0\}\cup\{0,1\}\times I$ is obtained by radial projection}
\label{fig:Icofibr}
\end{figure}
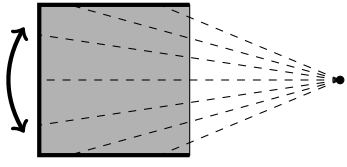

\end{ex}

If $X$ is any $G$-space and $Y$ is any space, then the mapping space $Y^X$ (given the compact-open topology) inherits an action of $G$ via pre-composition.

\begin{prop}[{Compare \cite[Theorem 2.8.2]{Spa}}]\label{prop:Spanier}
Let $i:A\to X$ be a $G$-cofibration, where $A$ and $X$ are locally compact Hausdorff spaces, and let $Y$ be any space. Then the induced map
\[
i^*:Y^X\to Y^A,\qquad i^*(f)= f\circ i
\]
is a $G$-fibration.
\end{prop}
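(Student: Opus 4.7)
The plan is to use Lemma \ref{lem:HurGfibr} and reduce the proof to constructing a $G$-lifting function $\lambda \co Y^X \times_{Y^A} P(Y^A) \to P(Y^X)$ for $i^*$. Since $A$ and $X$ are locally compact Hausdorff, the exponential law gives natural homeomorphisms
\[
P(Y^X) = (Y^X)^I \cong Y^{X\times I}, \qquad P(Y^A)=(Y^A)^I \cong Y^{A\times I},
\]
where $I$ carries trivial $G$-action and the diagonal action is used on $X\times I$ and $A\times I$. These homeomorphisms are $G$-equivariant when $Y^{X\times I}$ and $Y^{A\times I}$ are given the action of $G$ by precomposition, as one checks directly. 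Thus a point in the pullback $Y^X\times_{Y^A} P(Y^A)$ corresponds to a pair $(f,\tilde\gamma)$ where $f\co X\to Y$, $\tilde\gamma\co A\times I\to Y$, and $f\circ i=\tilde\gamma(-,0)$.

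For such a pair, define $\hat f \co X\times\{0\}\cup A\times I \to Y$ by
\[
\hat f(x,0)=f(x), \qquad \hat f(a,t)=\tilde\gamma(a,t);
\]
the compatibility condition ensures this is well defined and continuous. Now invoke Lemma \ref{lem:Gcofibr}(3) to obtain a $G$-equivariant retraction $r\co X\times I \to X\times\{0\}\cup A\times I$ (with $G$ acting trivially on $I$). Set
\[
\lambda(f,\tilde\gamma) := \hat f \circ r \co X\times I \to Y.
\]
Continuity of $\lambda$ as a map out of $Y^X\times_{Y^A} Y^{A\times I}$ follows from the standard argument used in Spanier \cite[Theorem 2.8.2]{Spa} (again using local compactness of $X$ and $A$), and the identity $i^*\circ \lambda(f,\tilde\gamma)=\tilde\gamma$ is immediate from the fact that $r$ is the identity on $A\times I$.

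The only additional point, beyond the non-equivariant case, is equivariance of $\lambda$. For $g\in G$, the construction $(f,\tilde\gamma)\mapsto \hat f$ is natural in the sense that $\widehat{g\cdot f, g\cdot\tilde\gamma} = \hat f \circ (g^{-1}\times \mathrm{id}_I)$, as one reads off from the formulas $(g\cdot f)(x)=f(g^{-1}x)$ and $(g\cdot\tilde\gamma)(a,t)=\tilde\gamma(g^{-1}a,t)$. Combining this with the equivariance $r(g^{-1}x,t)=(g^{-1}\times\mathrm{id}_I)\,r(x,t)$ of the retraction yields
\[
\lambda(g\cdot f,g\cdot\tilde\gamma)(x,t) = \hat f\bigl((g^{-1}\times\mathrm{id}_I)\,r(x,t)\bigr) = \hat f\bigl(r(g^{-1}x,t)\bigr) = \bigl(g\cdot\lambda(f,\tilde\gamma)\bigr)(x,t),
\]
so $\lambda$ is a $G$-map, completing the proof. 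The main step of the argument is the construction of $\lambda$ via the equivariant retraction; the verification of equivariance is formal once this is in place, and the only subtlety lies in the continuity argument for $\lambda$, which is where the local compactness hypotheses on $X$ and $A$ are used.
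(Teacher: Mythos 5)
Your proof is correct and is precisely the intended argument: the paper omits the proof of \propref{prop:Spanier} as a routine equivariant upgrade of Spanier's Theorem 2.8.2, and your construction of a $G$-lifting function (per \lemref{lem:HurGfibr}) via the exponential law and the equivariant retraction supplied by \lemref{lem:Gcofibr}(3), followed by the formal equivariance check, is exactly that upgrade.
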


\begin{ex}\label{ex:Gfibration}
For any space $X$, let $PX=X^I$ denote the space of all paths in $X$. By Example \ref{ex:interval} and Proposition \ref{prop:Spanier}, the evaluation map
\[
\pi:PX\to X\times X,\qquad \pi(\gamma)=\big(\gamma(0),\gamma(1)\big)
\]
is a $\Z/2$-fibration, where $\Z/2$ acts on $X\times X$ by transposition of factors and on $PX$ by reversal of paths.

For later use, we note that restricting to fixed points yields a fibration over the diagonal $(X\times X)^{\Z/2}=\Delta X$ whose total space $(PX)^{\Z/2}=\{\gamma\in PX \mid \overline{\gamma}=\gamma\}$ is homeomorphic to $PX$ (via a homeomorphism which restricts a symmetric path to its first half). In fact, this fibration $\pi^{\Z/2}$ is homeomorphic with the fibration
 \[
 \xymatrix{
 P_0X \ar[r] & PX \ar[r]^{\mathrm{ev}_0} & X
 }
 \]
 given by evaluation at $0$, with fibre the based path space. Hence $\pi^{\Z/2}$ is a homotopy equivalence.
\end{ex}

Recall that a $G$-space $X$ is a \emph{$G$-ENR} if it embeds as a $G$-retract of an open neighbourhood $U$ in some finite-dimensional $G$-representation $V$.

\begin{prop}\label{prop:GLEC}
Let $X$ be a $G$-ENR, and let $A\subseteq X$ be a closed sub-$G$-ENR. Then the inclusion $i:A\hookrightarrow X$ is a $G$-cofibration.
\end{prop}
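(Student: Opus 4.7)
The plan is to verify condition (2) of \lemref{lem:Gcofibr}---that $(X,A)$ is a $G$-NDR pair---and conclude the cofibration property from the equivalence stated there. I will work under the tacit hypothesis that $G$ is a compact Lie group, so that averaging over $G$ and invariant inner products on representations are available (this matches the standing assumptions used elsewhere in the paper for substantive equivariant statements).

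First I would use the two $G$-ENR structures to produce a $G$-equivariant retraction from an open $G$-neighbourhood of $A$ in $X$ onto $A$. Choose embeddings $X\subseteq U\subseteq V$ and $A\subseteq N\subseteq V'$ with $G$-retractions $\rho:U\to X$ and $\sigma':N\to A$, where $V,V'$ are $G$-representations. Since $X$ is a subspace of the metrizable space $V$, it is normal, so ordinary coordinatewise Tietze extension applied to $A\hookrightarrow V'$ yields some (a priori non-equivariant) extension $X\to V'$; averaging over $G$ and using linearity of the action on $V'$ then produces a genuinely $G$-equivariant extension $\tilde{j}:X\to V'$. Setting $W:=\tilde{j}^{-1}(N)$ and $\sigma:=\sigma'\circ\tilde{j}|_W$ gives an open $G$-neighbourhood $W\supseteq A$ in $X$ together with a $G$-equivariant retraction $\sigma:W\to A$.

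Next I construct the NDR deformation using the linear structure on $V$. Define
\[
H(x,t):=\rho\bigl((1-t)x+t\,\sigma(x)\bigr).
\]
This requires the segment $[x,\sigma(x)]$ to lie in $U$; since $A\subseteq U$ and $\sigma$ fixes $A$ pointwise, continuity and openness of $U$ imply that the set $W^{\circ}:=\{x\in W:[x,\sigma(x)]\subseteq U\}$ is an open $G$-neighbourhood of $A$ in $W$ (it is $G$-invariant because $U$, $\sigma$ and the convex combination are), and I replace $W$ by $W^\circ$. Equivariance of $H$ follows from equivariance of $\rho$ and $\sigma$ together with linearity of the $G$-action on $V$, while $H(x,0)=x$, $H(x,1)=\sigma(x)\in A$ and $H(a,t)=a$ are immediate. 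For the NDR function, equip $V$ with a $G$-invariant inner product, restrict the induced metric $d$ to $X$, and set
\[
\varphi(x):=\frac{d(x,A)}{d(x,A)+d(x,X\setminus W)},
\]
which is well-defined since $A\cap(X\setminus W)=\emptyset$, is a $G$-map (with trivial action on $I$) because $d$, $A$ and $W$ are $G$-invariant, and satisfies $\varphi^{-1}(0)=A$ and $\varphi(X\setminus W)=\{1\}$.

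The two steps I expect to be the main obstacles are the equivariantisation of $\tilde{j}$ in the first paragraph and the shrinking of $W$ in the second to guarantee that the straight-line homotopy $(1-t)x+t\,\sigma(x)$ stays inside $U$. Both are handled by standard averaging and openness arguments; once they are in place, the triple $(W,H,\varphi)$ exhibits $(X,A)$ as a $G$-NDR pair, and \lemref{lem:Gcofibr} completes the proof.
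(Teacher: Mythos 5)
Your proof is correct and follows essentially the same route as the paper: both verify the $G$-NDR condition of Lemma~\ref{lem:Gcofibr} using the straight-line homotopy $(1-t)x+t\sigma(x)$ pushed back into $X$ by the ambient retraction, restricted to the open invariant set where the segment stays in the ambient neighbourhood. You additionally spell out two steps the paper leaves implicit --- the Tietze-plus-averaging construction of the equivariant neighbourhood retraction $W\to A$, and the explicit metric formula for $\varphi$ (where the paper invokes $G$-perfect normality) --- which is a welcome elaboration rather than a divergence.
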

\begin{proof}
By Lemma \ref{lem:Gcofibr} it suffices to show that $(X,A)$ is a $G$-NDR pair. Let $W$ be an invariant neighbourhood of $A$ in $X$ with a $G$-retraction $r:W\to A$. Let $W'$ be an invariant neighbourhood of $X$ in some $G$-representation $V$ with a $G$-retraction $s:W'\to X$. Define
\[
U= \{ x\in W \mid (1-t)x + tr(x) \in W'\mbox{ for all }t\in [0,1]\}.
\]
Now define a $G$-homotopy $H:U\times I\to X$ by $H(x,t)=s\left((1-t)x + tr(x)\right)$. Finally, since $X$ is $G$-perfectly normal, we may take a $G$-map $\varphi:X\to I$ such that $\varphi^{-1}(0)=A$ and $\varphi(X-U)=1$.
\end{proof}

\begin{cor}\label{cor:symLEC}
Let $X$ be an ENR. Then the diagonal inclusion $\Delta: X\hookrightarrow X\times X$ is a $\Z/2$-cofibration, where $\Z/2$ acts trivially on $X$ and by transposition on $X\times X$.
\end{cor}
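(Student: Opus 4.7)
The plan is to apply the preceding proposition with $G=\Z/2$, taking the ambient $G$-space to be $X\times X$ with the transposition action, and the subspace to be the diagonal $\Delta X$ (which is fixed pointwise, hence carries the trivial $\Z/2$-action inherited from transposition). So I need to verify three things: that $X\times X$ is a $\Z/2$-ENR, that $\Delta X$ is a closed subset of $X\times X$, and that $\Delta X$ is a sub-$\Z/2$-ENR, i.e.\ a $\Z/2$-ENR in its own right.

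For the first, since $X$ is an ENR I fix an embedding of $X$ as a retract of an open subset $U\subseteq\R^n$, with retraction $r\co U\to X$. Equipping $\R^n\oplus\R^n$ with the transposition involution makes it a finite-dimensional $\Z/2$-representation $V$; then $U\times U$ is an invariant open neighbourhood of $X\times X$ in $V$, and $r\times r\co U\times U\to X\times X$ is a $\Z/2$-equivariant retraction. Hence $X\times X$ is a $\Z/2$-ENR.

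For the second, any ENR is Hausdorff, so $\Delta X$ is closed in $X\times X$. For the third, $\Delta X$ is homeomorphic to $X$ via projection onto the first coordinate, and the induced $\Z/2$-action is trivial. Viewing $\R^n$ as a trivial $\Z/2$-representation, the embedding $X\hookrightarrow U$ and retraction $U\to X$ are automatically equivariant, so $\Delta X$ is a $\Z/2$-ENR with trivial action. The preceding proposition now applies and yields the claim.

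There is no real obstacle: the proof is a straightforward verification. The only point worth noting is the mild clash of actions---the subspace $\Delta X$ carries the trivial action while $X\times X$ carries transposition---but since $\Delta X$ is precisely the fixed-point set $(X\times X)^{\Z/2}$, the inclusion $\Delta\co X\hookrightarrow X\times X$ is equivariant and the two pieces of $G$-ENR data are compatible.
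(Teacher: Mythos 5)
Your proposal is correct and is exactly the argument the paper intends: the corollary is deduced from the preceding proposition by checking that $X\times X$ with the transposition action is a $\Z/2$-ENR (via $U\times U\subseteq\R^n\oplus\R^n$ with the swap involution and the retraction $r\times r$) and that $\Delta X\cong X$ is a closed sub-$\Z/2$-ENR with trivial action. All three verifications are right, so there is nothing to add.
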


Spaces satisfying the conclusion of Corollary \ref{cor:symLEC} will be called \emph{symmetrically locally equi-connected}, or \emph{symmetrically LEC}.

\begin{prop}[{Relative $G$-homotopy lifting (compare \cite{Strom}, \cite[15.5]{Lueck})}]\label{prop:relGHLP}
Let $p:E\to B$ be a $G$-fibration, and let $i:A\to X$ be a closed $G$-cofibration. Then the relative $G$-homotopy lifting problem
\[
\xymatrix{
X\times \{0\}\cup A\times I \ar[rr]^-{\tilde{H_0}\cup K} \ar[d] & &E \ar[d]^p \\
X\times I\ar[rr]_{H} \ar@{.>}[urr]^{F} & & B
}
\]
has a solution $F:X\times I\to E$ for all initial data $H,\tilde{H_0},K$.
\end{prop}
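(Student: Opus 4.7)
The plan is to adapt the classical non-equivariant proof (à la Strøm or Spanier) directly to the $G$-equivariant setting by combining the $G$-lifting function provided by \lemref{lem:HurGfibr} with the $G$-retraction provided by \lemref{lem:Gcofibr}.

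First, I would gather the structural ingredients. \lemref{lem:HurGfibr} supplies a $G$-lifting function $\lambda\co E\times_B PB\to PE$ for $p$, and \lemref{lem:Gcofibr}(3) gives an equivariant retraction $r\co X\times I\to X\times\{0\}\cup A\times I$. Next I would upgrade $r$ to a strong $G$-deformation retraction. The inclusion $j\co X\times\{0\}\cup A\times I\hookrightarrow X\times I$ is itself a $G$-cofibration (a pushout of $i\times\mathrm{id}_I$) and a $G$-homotopy equivalence, since both source and target $G$-deformation retract onto $X\times\{0\}$ via $(x,t,s)\mapsto(x,(1-s)t)$, a deformation which preserves the subspace. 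An equivariant analogue of the classical fact that a cofibration which is also an equivalence is the inclusion of a strong deformation retract then produces a $G$-homotopy $D\co (X\times I)\times I\to X\times I$ with $D_0=\mathrm{id}$, $D_1=j\circ r$, and $D_s$ fixing $X\times\{0\}\cup A\times I$ pointwise for all $s$.

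With these in hand, I would define the candidate lift $F\co X\times I\to E$ by
\[
F(x,t) \;=\; \lambda\bigl((\tilde H_0\cup K)(r(x,t)),\;\eta_{x,t}\bigr)(1),
\]
where $\eta_{x,t}\co I\to B$ is the path $s\mapsto H(D_{1-s}(x,t))$, running from $H(r(x,t))$ to $H(x,t)$. The verifications to carry out are: equivariance and continuity of $F$ (immediate from those of $r$, $D$, $\lambda$ and the given data); the lifting equation $p\circ F(x,t)=\eta_{x,t}(1)=H(x,t)$ (immediate from the definition of a lifting function); and the boundary condition $F|_{X\times\{0\}\cup A\times I}=\tilde H_0\cup K$, which uses that on this subspace $D_s$ is constantly the inclusion, so $\eta_{x,t}$ is the constant path at $H(x,t)$ while $r(x,t)=(x,t)$.

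The main obstacle is precisely this last step: it requires the lifting function to send constant paths to constant paths, i.e.\ $\lambda(e,c_{p(e)})=c_e$ for every $e\in E$, a regularity condition not guaranteed by \lemref{lem:HurGfibr}. I would handle this by an equivariant version of Hurewicz's standard modification, replacing $\lambda$ by $\lambda'(e,\gamma)(s)=\lambda(e,\gamma\circ\phi_s)(s)$ for a reparametrization $\phi_s$ damping $\gamma$ to a constant near the degenerate end. Since $G$ acts on $PB$ and $PE$ by homeomorphisms and $\phi_s$ is independent of the $G$-action, $\lambda'$ remains a $G$-lifting function, and the construction above then delivers the required $F$. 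This regularity step, together with equivariantly upgrading $r$ to $D$, is where the argument departs most visibly from the plain non-equivariant case.
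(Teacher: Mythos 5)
The paper gives no proof of this proposition at all---it is presented as a routine equivariant generalization, with pointers to Str{\o}m and to L\"uck---so your argument has to stand on its own. Its architecture is sound and is one of the standard routes: combine the $G$-lifting function from \lemref{lem:HurGfibr} with the $G$-retraction $r$ from \lemref{lem:Gcofibr}, upgraded to a strong $G$-deformation retraction $D$ of $X\times I$ onto $X\times\{0\}\cup A\times I$. (For that upgrade you do not need the equivariant ``cofibration $+$ equivalence $\Rightarrow$ SDR'' theorem: writing $r=(r_1,r_2)$, the formula $D_s(x,t)=(r_1(x,st),\,(1-s)t+s\,r_2(x,t))$ is an explicit strong $G$-deformation retraction, which keeps the argument elementary.) You also correctly locate the delicate point: with $F(x,t)=\lambda((\tilde H_0\cup K)(r(x,t)),\eta_{x,t})(1)$, the boundary condition forces $\lambda(e,c_{p(e)})=c_e$.

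The gap is that your repair of this last point fails. If $\gamma$ is a constant path then $\gamma\circ\phi_s=\gamma$ for \emph{every} reparametrization $\phi_s$ of the source interval, so $\lambda'(e,c_b)(s)=\lambda(e,c_b)(s)$: the modified lifting function agrees with the original one on constant paths and is therefore no more regular than $\lambda$ was. Precomposition can never regularize a lifting function; Hurewicz's genuine regularization theorem instead rescales the \emph{target} parameter using a metric on $B$, so invoking it would both import a hypothesis ($B$ metrizable) absent from the statement and require an invariant metric. The standard fix, which salvages your construction and is essentially Str{\o}m's own argument, is to stop evaluating at $1$: take a $G$-invariant $\varphi\co X\times I\to I$ with $\varphi^{-1}(0)=X\times\{0\}\cup A\times I$ (from the $G$-NDR data of \lemref{lem:Gcofibr}), reparametrize $\eta_{x,t}$ so that it already reaches $H(x,t)$ at time $\varphi(x,t)$, and set $F(x,t)=\lambda((\tilde H_0\cup K)(r(x,t)),\eta_{x,t})(\varphi(x,t))$. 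On the subspace this evaluates the lift at time $0$, where $\lambda(e,\gamma)(0)=e$ holds for any lifting function, so no regularity is needed; continuity at $\varphi=0$ follows because $\eta_{x,t}$ degenerates to a constant path there. As written, though, your proof does not close.
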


\section{Equivariant sectional category}\label{sec:Gsecat}

In this section $G$ will denote a compact Lie group. We will develop some properties of the equivariant sectional category of a $G$-fibration, introduced in \cite{CG}. These will be applied in later sections to study the symmetrized topological complexity.


\begin{defn}
The \emph{equivariant sectional category} of a $G$-fibration $p:E \to B$, denoted
$\secat_G(p)$, is the least integer $k$ such that $B$ may be covered by invariant open
sets $U_0,U_1,\ldots , U_k$ on each of which there exists a local $G$-section of $p$, that is, a $G$-map $\sigma_i: U_i\to E$ such that
$p\sigma_i=\mathrm{incl}:U_i\hookrightarrow B$.
\end{defn}

More generally, the equivariant sectional category of any $G$-map $p:E\to B$ may be defined in terms of local $G$-homotopy sections of $p$, as in \cite[Definition 4.1]{CG}. It is immediate from the $G$-HLP that the two definitions coincide for $G$-fibrations.

We are going to generalize various results of Schwarz \cite{Schwarz} about the ordinary sectional category $\secat(p)$ to the equivariant setting. Recall that a $G$-space $X$ is called \emph{$G$-paracompact} if for every cover $\mathscr{U}=\{U_\lambda\}_{\lambda\in \Lambda}$ of $X$ by invariant open sets, there exists a \emph{$G$-equivariant partition of unity} subordinate to $\mathscr{U}$ (that is, a collection of $G$-maps $\{h_\lambda:X\to I\}_{\lambda\in \Lambda}$ from $X$ to the interval $I=[0,1]$ endowed with the trivial $G$-action which forms a partition of unity subordinate to $\mathscr{U}$ in the usual sense).

\begin{lem}\label{lem:Gparacompact}
Let $X$ be a paracompact $G$-space. Then $X$ is $G$-paracompact.
\end{lem}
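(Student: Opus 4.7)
My plan is to average a paracompactness-supplied (non-invariant) partition of unity over $G$ by integration against normalized Haar measure---which exists since $G$ is a compact Lie group---and then check that local finiteness survives the averaging.

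First, since $X$ is paracompact in the ordinary sense, the given invariant open cover $\mathscr{U}=\{U_i\}_{i\in I}$ admits a locally finite partition of unity $\{f_i\}_{i\in I}$ with $\mathrm{supp}(f_i)\subseteq U_i$; these $f_i$ are not assumed $G$-invariant. Define
\[
h_i(x)=\int_G f_i(g\cdot x)\,dg.
\]
Continuity of each $h_i$ follows from joint continuity of $(g,x)\mapsto f_i(gx)$ together with compactness of $G$, and $G$-invariance is immediate from left-invariance of Haar measure. Since each $U_i$ is $G$-invariant, $\mathrm{supp}(h_i)\subseteq G\cdot\mathrm{supp}(f_i)\subseteq U_i$. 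For each fixed $x$ only finitely many $i$ contribute to $\sum_i h_i(x)$ (see below), so $\sum_i h_i(x)=\int_G\sum_i f_i(gx)\,dg=\int_G 1\,dg=1$.

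The nontrivial point---and the main content of the argument---is local finiteness of $\{h_i\}$. Fix $x\in X$. The orbit $Gx$ is compact, so local finiteness of $\{f_i\}$ lets me cover $Gx$ by finitely many open sets whose union $W$ meets only finitely many $\mathrm{supp}(f_i)$. Applying the tube lemma to the continuous action map $G\times X\to X$ (using compactness of $G$) produces an open neighborhood $V$ of $x$ with $G\cdot V\subseteq W$. Any index $i$ with $\mathrm{supp}(h_i)\cap V\neq\emptyset$ satisfies $\mathrm{supp}(f_i)\cap G\cdot V\neq\emptyset$, hence $\mathrm{supp}(f_i)\cap W\neq\emptyset$; by construction of $W$ only finitely many $i$ qualify.

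The main obstacle is precisely this last step: local finiteness is not automatically preserved by $G$-averaging, because the supports get saturated by the $G$-action. Compactness of $G$ enters essentially in three places---to obtain the normalized Haar measure, to guarantee compactness of orbits $Gx$, and to invoke the tube lemma---so all of the hypotheses are used.
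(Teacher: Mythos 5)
Your proof is correct and takes the same route as the paper: average a non-equivariant partition of unity subordinate to the invariant cover against normalized Haar measure on the compact group $G$. You in fact give more detail than the paper, which states the averaging formula and stops; your tube-lemma verification that local finiteness survives the saturation of supports is exactly the point the paper leaves implicit.
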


\begin{proof}
Given a cover $\mathscr{U}=\{U_\lambda\}$ of $X$ by $G$-invariant open subsets, we may find a non-equivariant partition of unity $\{g_\lambda\}$ subordinate to $\mathscr{U}$. Now obtain a $G$-equivariant partition of unity $\{h_\lambda\}$ subordinate to $\mathscr{U}$ by integration:
\[
h_\lambda(x) = \frac{1}{\mu(G)}\int_{a\in G} g_\lambda(ax)\, d\mu,
\]
where $\mu$ is the Haar measure on $G$.
\end{proof}

Let $p:E\to B$ be any map, and let $k$ be a non-negative integer. Then the \emph{$(k+1)$-fold fibred join} of $p$ is defined as follows. Let
\[
J^k_B(E):=\{ (e_0,t_0,\ldots, e_k,t_k) \mid e_i\in E, t_i\in [0,1],p(e_i)=p(e_j),\sum t_i =1\}/\sim,
\]
where the equivalence relation $\sim$ is generated by $$(e_0,t_0,\ldots, e_i,0,\ldots e_k,t_k)\sim (e_0,t_0,\ldots, e_i',0,\ldots e_k,t_k).$$ Define a map $p_k:J^k_B(E)\to B$ by setting
 \[
 p_k\big([e_0,t_0,\ldots, e_k,t_k]\big) = p(e_0)=\cdots = p(e_k).
 \]
 If $p:E\to B$ is a $G$-map, then $p_k:J^k_B(E)\to B$ is also a $G$-map, where $J^k_B(E)$ is given the diagonal $G$-action
\[
g\cdot [e_0,t_0,\ldots, e_k,t_k]=[g\cdot e_0,t_0,\ldots, g\cdot e_k,t_k].
\]

\begin{lem}
If $p:E\to B$ is a (Serre) $G$-fibration with fibre $F$, then $p_k:J^k_B(E)\to B$ is a (Serre) $G$-fibration with fibre $J^k(F)$, the $(k+1)$-fold iterated join of $F$.
\end{lem}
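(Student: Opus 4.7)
The plan is to separate the problem into identifying the fibre and establishing the $G$-fibration property. The fibre identification is straightforward: by the definition of $J^k_B(E)$, the preimage $p_k^{-1}(b)$ consists of equivalence classes $[e_0,t_0,\ldots,e_k,t_k]$ with every $e_i$ in $p^{-1}(b)\cong F$ and $\sum t_i=1$, modulo the relation collapsing coordinates of weight zero; by inspection this is the iterated topological join $F\ast\cdots\ast F=J^k(F)$.

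For the Serre case I would invoke \lemref{lem:SerreGfibr}: it suffices to check that $(p_k)^H\colon J^k_B(E)^H\to B^H$ is an ordinary Serre fibration for every closed subgroup $H\le G$. The key point is the natural homeomorphism
\[
J^k_B(E)^H \;\cong\; J^k_{B^H}(E^H),
\]
obtained by observing that a class $[e_0,t_0,\ldots,e_k,t_k]$ is $H$-fixed exactly when $he_i=e_i$ for all $h\in H$ and all $i$ with $t_i>0$ (coordinates of weight zero being collapsed). Since some $t_j>0$, the common base point $b=p(e_j)$ lies in $B^H$, so $(p^H)^{-1}(b)$ is nonempty and the representatives of weight zero can be taken in $E^H$. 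Under this identification $(p_k)^H$ is the $(k{+}1)$-fold fibred join of the Serre fibration $p^H\colon E^H\to B^H$ (Serre by \lemref{lem:SerreGfibr} applied to $p$), and this is a Serre fibration by Schwarz's classical (non-equivariant) result in \cite{Schwarz}.

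For the Hurewicz case I would instead use \lemref{lem:HurGfibr} to produce a $G$-lifting function for $p_k$ directly. Starting from a $G$-lifting function $\lambda\colon E\times_B PB\to PE$ for $p$, define
\[
\Lambda\bigl([e_0,t_0,\ldots,e_k,t_k],\gamma\bigr)(s) \;=\; \bigl[\lambda(e_0,\gamma)(s),t_0,\ldots,\lambda(e_k,\gamma)(s),t_k\bigr].
\]
This is $G$-equivariant for the diagonal action and satisfies $p_k\circ\Lambda(-,\gamma)=\gamma$ since $p\lambda(e_i,\gamma)=\gamma$ for each $i$. The main obstacle, and really the only genuine technicality, is well-definedness and continuity on the quotient: if $t_i=0$ and the representative $e_i$ is replaced by some $e_i'$, then the $i$-th slot of the output has weight zero at every time $s$, so the class in $J^k_B(E)$ is unchanged, and $\Lambda$ is well-defined. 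Continuity is obtained by first producing the obvious continuous analogue on the pre-quotient $\{(e_0,t_0,\ldots,e_k,t_k)\mid p(e_i)=p(e_j),\;\sum t_j=1\}\times_B PB$ and then descending, using that in the category of compactly generated weak Hausdorff spaces (fixed at the start of \secref{sec:prelim}) quotient maps are preserved under product with a fixed space. The statement $p_k\circ\Lambda=\gamma$ and $\Lambda(e,\cdot)(0)=e$ then identify $\Lambda$ as a $G$-lifting function, completing the proof via \lemref{lem:HurGfibr}.
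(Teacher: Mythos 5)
Your proof is correct and follows essentially the same route as the paper: the Hurewicz case via the explicit $G$-lifting function of \lemref{lem:HurGfibr} (your formula for $\Lambda$ is the paper's $\lambda_{p_k}$), and the Serre case via \lemref{lem:SerreGfibr} together with the identification $J^k_B(E)^H\cong J^k_{B^H}(E^H)$ and the non-equivariant fact that fibred joins preserve Serre fibrations. You merely spell out the well-definedness, continuity, and fixed-point details that the paper leaves implicit.
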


\begin{proof}
If $p:E\to B$ is a $G$-fibration, then by Lemma \ref{lem:HurGfibr} it admits a $G$-lifting function $\lambda_p:E\times_B PB\to PE$. We may then construct a $G$-lifting function $\lambda_{p_k}:J^k_B(E)\times_B PB\to PJ^k_B(E)$ for $p_k$ by setting
\[
\lambda_{p_k}\left([e_0,t_0,\ldots, e_k,t_k],\gamma\right) = [\lambda_p(e_0,\gamma),t_0,\ldots , \lambda_p(e_k,\gamma),t_k].
\]
Hence $p_k$ is a $G$-fibration.

For the statement about Serre $G$-fibrations, we use Lemma \ref{lem:SerreGfibr}. It suffices to check that $p_k^H:J^k_B(E)^H\to B^H$ is a Serre fibration for all subgroups $H\le G$. Clearly taking fixed points commutes with fibred joins, so that this map is homeomorphic to $(p^H)_k:J^k_{B^H}(E^H)\to B^H$. The result now follows since the fibred join of (Serre) fibrations is again a (Serre) fibration.
\end{proof}

Just as in the non-equivariant case, the fibred join construction allows us to reduce the computation of equivariant sectional category to the question of existence of sections of auxiliary fibrations.

\begin{prop}\label{prop:Gfibjoin}
Let $p:E\to B$ be a $G$-fibration over a paracompact base space. Then $\secat_G(p)\le k$ if and only if $p_k:J_B^k(E)\to B$ admits a (global) $G$-section.
\end{prop}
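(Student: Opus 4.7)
The plan is to follow Schwarz's classical argument for the non-equivariant statement, importing equivariance at each step via the preliminary results of Section~\ref{sec:prelim}, most crucially the equivariant partition of unity guaranteed by \lemref{lem:Gparacompact}.

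For the ``only if'' direction, I assume $\secat_G(p)\le k$ and fix an invariant open cover $U_0,\ldots,U_k$ of $B$ together with equivariant local sections $\sigma_i\co U_i\to E$. Since $B$ is paracompact, \lemref{lem:Gparacompact} produces a $G$-invariant partition of unity $\{h_i\}_{i=0}^k$ subordinate to $\{U_i\}$. I would then define $s\co B\to J_B^k(E)$ by
\[
s(b)=[\sigma_0(b),h_0(b),\ldots,\sigma_k(b),h_k(b)],
\]
extending each $\sigma_i$ arbitrarily off $U_i$; this causes no issue because wherever $\sigma_i(b)$ is not genuinely defined we have $h_i(b)=0$, so the offending coordinate is collapsed by the equivalence relation defining the fibred join. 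The resulting $s$ is well-defined, continuous, and a section of $p_k$. It is $G$-equivariant because the $\sigma_i$ are $G$-maps, the scalars $h_i$ are $G$-invariant, and the $G$-action on $J_B^k(E)$ is diagonal on the $e_i$-entries and trivial on the $t_i$-entries.

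For the converse, suppose $p_k$ admits a global $G$-section $s\co B\to J_B^k(E)$. Because the equivalence relation defining $J_B^k(E)$ never alters the barycentric coordinates, each $t_i\co J_B^k(E)\to[0,1]$ descends to a well-defined continuous function, which is moreover $G$-invariant since the diagonal $G$-action fixes the $t_i$-slots. Setting $V_i=\{b\in B\mid t_i(s(b))>0\}$ therefore gives an invariant open cover of $B$, since $\sum_i t_i\equiv 1$ forces some $t_i(s(b))$ to be positive at every $b$. On the locus $\{t_i>0\}$ the $i$-th representative $e_i$ is unambiguously defined and equivariant, so $\sigma_i(b):=e_i(s(b))$ provides a $G$-equivariant local section of $p$ over $V_i$, yielding $\secat_G(p)\le k$.

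The only substantive obstacle is producing an \emph{equivariant} partition of unity in the forward direction; this is precisely what \lemref{lem:Gparacompact} supplies via Haar averaging over the compact Lie group $G$. The rest is a straightforward equivariant adaptation of Schwarz's proof, made possible by the fact that the barycentric coordinates of the fibred join are $G$-invariant while the diagonal $G$-action is compatible with the collapsing equivalence relation.
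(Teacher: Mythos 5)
Your proposal is correct and follows essentially the same route as the paper: the converse direction reads off the invariant open sets $U_i=\sigma^{-1}(t_i^{-1}(0,1])$ and the local $G$-sections from the join coordinates, while the forward direction assembles a global $G$-section from an equivariant partition of unity supplied by \lemref{lem:Gparacompact}. Your extra remark about extending $\sigma_i$ off $U_i$ and the collapsing of coordinates where $h_i=0$ is just a slightly more explicit treatment of a point the paper leaves implicit.
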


\begin{proof}
The original proof of Schwarz \cite[Theorem 3]{Schwarz} can be made equivariant, however the proof given by James in \cite[Proposition 8.1]{James} is simpler.

Suppose $p_k$ admits a $G$-section $\sigma:B\to J^k_B(E)$. We may think of the join coordinates in $J^k_B(E)$ as continuous functions $t_i:J^k_B(E)\to [0,1]$. Define $U_i:=\sigma^{-1}(t_i^{-1}(0,1])$. Hence $U_i$ consists of those $b\in B$ such that the $i$-th join coordinate of $\sigma(b)$ is positive. The $U_i$ are open in $B$, and clearly invariant. We may define local $G$-sections $\sigma_i:U_i\to E$ by the formula
\[
\sigma(b) = [\sigma_0(b),t_0,\ldots , \sigma_k(b),t_k].
\]

Conversely, suppose $\secat_G(p)\le k$ and we have a cover of $B$ by invariant open sets $U_0,\ldots , U_k$, each of which admits a local $G$-section $\sigma_i:U_i\to E$. Since $B$ is $G$-paracompact by Lemma \ref{lem:Gparacompact}, we may take a $G$-equivariant partition of unity $\{h_0,\ldots ,h_k\}$ subordinate to this cover. Then
\[
\sigma(b) = [\sigma_0(b),h_0(b),\ldots , \sigma_k(b),h_k(b)]
\]
defines a $G$-section $\sigma:B\to J^k_B(E)$ of $p_k$.
\end{proof}

The question of existence of equivariant sections of $G$-fibrations is addressed by equivariant obstruction theory. The basic theory (for $G$ finite) was outlined by Bredon in  \cite{Bredon1,Bredon2}, including the definitions of the Bredon cohomology groups $H^*_G(X;M)$, which form the natural home for equivariant obstruction classes. These references only consider the equivariant extension problem, however. For full details of the equivariant lifting problem for $G$-fibrations (where $G$ is compact Lie), we refer the reader to \cite{MukMuk}. Once the theory is in place, the following is a generalization of \cite[Theorem 5]{Schwarz}.

\begin{thm}\label{thm:uppersecatG}
Let $p:E\to B$ be a Serre $G$-fibration with fibre $F$, whose base $B$ is a $G$-CW complex of dimension at least $2$. Assume that $\pi_n(F^H)=0$ for all subgroups $H\le G$ and all $n<s$, where $s\ge 0$. Then $$\secat_G(p)< \frac{\dim\, B+1}{s+1}. $$
\end{thm}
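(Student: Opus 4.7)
The plan is to adapt Schwarz's proof of \cite[Theorem 5]{Schwarz} to the equivariant setting, using \propref{prop:Gfibjoin} as the bridge. By that proposition, $\secat_G(p) \le k$ is equivalent to the existence of a global $G$-section of the fibred join $p_k : J^k_B(E) \to B$, which is a Serre $G$-fibration with fibre $J^k(F)$. Since the fibred join commutes with taking $H$-fixed points, the restriction $p_k^H$ has fibre $J^k(F^H)$ for every closed subgroup $H \le G$.

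The next step is a connectivity computation. The standard formula $\mathrm{conn}(X * Y) = \mathrm{conn}(X) + \mathrm{conn}(Y) + 2$ together with induction on $k$ shows that if $F^H$ is $(s-1)$-connected, then $J^k(F^H)$ is $((k+1)(s+1)-2)$-connected; equivalently, $\pi_n(J^k(F^H)) = 0$ for all $n \le (k+1)(s+1)-2$ and all closed $H \le G$. Now I would invoke the equivariant obstruction theory for Serre $G$-fibrations developed in \cite{MukMuk}: the obstruction to extending a $G$-section of $p_k$ across the $(n+1)$-cells of the $G$-CW structure on $B$ is a class in the Bredon cohomology group $H^{n+1}_G(B; \underline{\pi_n})$, with coefficients in the system $G/H \mapsto \pi_n(J^k(F^H))$. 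By the connectivity estimate these obstructions vanish whenever $n \le (k+1)(s+1)-2$, so a global $G$-section of $p_k$ exists provided $\dim B \le (k+1)(s+1)-1$. Choosing $k$ to be the smallest integer satisfying $k+1 \ge (\dim\, B + 1)/(s+1)$ yields the stated bound $\secat_G(p) \le k < (\dim\, B + 1)/(s+1)$.

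The main obstacle is marshalling the equivariant obstruction theory of \cite{MukMuk} cleanly: one needs both that $H$-fixed points commute with the fibred-join construction (so that the fibre of $p_k^H$ really is $J^k(F^H)$) and that the relevant Bredon coefficient system is precisely $G/H \mapsto \pi_n(J^k(F^H))$ together with the correct twisting by the $G$-action on fibres. A secondary technicality is producing an initial $G$-section on the $0$-skeleton of $B$, which reduces to the non-emptiness of each $J^k(F^H)$; this is implicit in the connectivity hypothesis for $s \ge 1$ and vacuous when $s = 0$, in which case the conclusion is the trivial bound $\secat_G(p) \le \dim\, B$ (compatible with the standing assumption $\dim\, B \ge 2$).
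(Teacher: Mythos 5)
Your proposal is correct and follows essentially the same route as the paper: reduce via Proposition~\ref{prop:Gfibjoin} to a global $G$-section of $p_k$, note that $J^k(F^H)$ is $\big((k+1)(s+1)-2\big)$-connected, and kill the obstructions in Bredon cohomology with coefficient system $G/H\mapsto\pi_n(J^k(F^H))$, with identical arithmetic on $k$. The only detail the paper makes explicit that you leave implicit is that the hypothesis $\dim B\ge 2$ forces the joins $J^k(F^H)$ to be simply connected for the relevant $k$, which supplies the $n$-simplicity needed for the obstruction theory to apply.
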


\begin{proof}
By Proposition \ref{prop:Gfibjoin} it suffices to show that $p_k:J^k_B(E)\to B$ admits a section whenever $k\ge \frac{\dim B+1}{s+1}-1$. The obstructions to such a section live in Bredon cohomology groups
\[
H^{n+1}_G\big(B;\pi_n(J^k\mathscr{F})\big),
\]
where $\pi_n(J^k\mathscr{F})$ denotes an equivariant local coefficient system on $B$, the values of which on $B^H$ are isomorphic to $\pi_n(J^k(F)^H)=\pi_n(J^k(F^H))$. By our assumption that $F^H$ is $(s-1)$-connected, the $(k+1)$-fold join $J^k(F^H)$ is $(s+1)(k+1)-2$-connected. Hence the obstructions all lie in zero groups, provided
\[
\dim\, B - 1 \le (s+1)(k+1)-2  \iff \dfrac{\dim\, B + 1}{s+1} \le k+1.
\]
(The assumption $\dim(B)-1\ge 1$ ensures that the spaces $J^k(F^H)$ are all simply-connected, and in particular $n$-simple.)
\end{proof}

\section{Symmetrized topological complexity}\label{sec:SymmTC}

For the remainder of the paper, $G$ will denote the group $\Z/2$. For any space $X$, let $PX$ denote the space of all paths in $X$, given the compact-open topology. The evaluation map
\[
\pi:PX\to X\times X,\qquad \pi(\gamma)=\big(\gamma(0),\gamma(1)\big)
\]
is $G$-equivariant with respect to the involutions given by reversing paths and transposing coordinates. We have seen in Example \ref{ex:Gfibration} that $\pi$ is a $G$-fibration.

\begin{defn}[\cite{BGRT}]
The \emph{symmetrized topological complexity} of $X$ is
\[
\TC^\Sigma(X):=\secat_G(\pi).
\]
\end{defn}

Our first result is an upper bound for $\TC^\Sigma(X)$ analogous to the upper bound for $\TC(X)$ given by Farber in \cite[Theorem 5.2]{Far04}.

\begin{thm}\label{thm:upper}
Let $X$ be an $s$-connected polyhedron. Then
\[
\TC^\Sigma(X)< \frac{2\,\dim\,X+1}{s+1}.
\]
\end{thm}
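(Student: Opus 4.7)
The plan is to apply the dimension-connectivity upper bound of Theorem \ref{thm:uppersecatG} directly to the $\Z/2$-fibration $\pi \colon PX \to X \times X$ identified in Example \ref{ex:Gfibration}, since by definition $\TC^\Sigma(X) = \secat_G(\pi)$ with $G = \Z/2$ acting by swap on $X \times X$ and by path reversal on $PX$. The base $B = X \times X$ is a polyhedron of dimension $2\dim X$, and after a suitable subdivision making the swap involution simplicial (or by appealing to Illman's equivariant triangulation theorem) it carries a $\Z/2$-CW structure. We may assume $\dim X \ge 1$, as the case $\dim X = 0$ reduces to $X$ being a point by $s$-connectivity.

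The crux is checking the fixed-point connectivity hypothesis: $\pi_n(F^H)=0$ for all $n<s$ and all subgroups $H\le \Z/2$. For $H=\{e\}$, each fibre of $\pi$ is a space of paths with fixed endpoints in $X$, which is homotopy equivalent to $\Omega X$; since $X$ is $s$-connected, $\Omega X$ is $(s-1)$-connected, as required. For $H=\Z/2$, the fibres we must consider sit over the fixed-point set $(X\times X)^{\Z/2} = \Delta X$. At a diagonal point $(x,x)$, the fibre is $\Omega_x X$ with the path-reversal involution, and by the computation in Example \ref{ex:Gfibration} its fixed-point space is homeomorphic (via restriction to the first half of the path) to the based path space $P_x X$, which is contractible and hence trivially $(s-1)$-connected.

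Feeding $\dim B = 2\dim X$ and this connectivity data into the conclusion of Theorem \ref{thm:uppersecatG} yields
\[
\TC^\Sigma(X) = \secat_G(\pi) < \frac{\dim B + 1}{s+1} = \frac{2\dim X + 1}{s+1},
\]
as claimed. There is no real technical obstacle: the fibre identification and the key fixed-point computation are already contained in Example \ref{ex:Gfibration}, and the rest is mechanical substitution into Theorem \ref{thm:uppersecatG}. The only mildly subtle ingredient is the existence of a $\Z/2$-CW structure on $X\times X$, which is standard for finite group actions on polyhedra.
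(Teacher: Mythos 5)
Your proposal is correct and follows essentially the same route as the paper: both reduce to Theorem \ref{thm:uppersecatG} applied to the $\Z/2$-fibration $\pi\colon PX\to X\times X$, using that $\Omega X$ is $(s-1)$-connected and that its fixed-point set under path reversal is the contractible based path space, as in Example \ref{ex:Gfibration}. The only cosmetic difference is how the $\Z/2$-CW structure on $X\times X$ is produced (you cite subdivision or Illman's theorem, the paper uses the Eilenberg--Steenrod simplicial Cartesian product, which makes the swap simplicial with the diagonal a subcomplex), and both are adequate.
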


\begin{proof}
Suppose $X\approx |K|$ for some simplicial complex $K$. The simplicial Cartesian product $K\times K$ is defined in \cite[Chapter II.8]{EilenbergSteenrod}; it is a simplicial complex such that $X\times X\approx |K\times K|$. Note that $G$ acts simplicially on $K\times K$, with fixed points given by the diagonal subcomplex $\Delta K\subseteq K\times K$. This gives $X\times X$ the structure of a $G$-CW complex.

Therefore we may apply Theorem \ref{thm:uppersecatG} to the $G$-fibration $\pi:PX\to X\times X$. Note that the fibre $\Omega X$ is $(s-1)$-connected. The fixed point subspace $(\Omega X)^G=\{\gamma\in \Omega X \mid \overline\gamma=\gamma\}$ is homeomorphic to the based path space $P_0X$ (as noted in Example \ref{ex:Gfibration}), hence is contractible. The result follows immediately.
\end{proof}

\begin{rem}
As was observed in Section 6 of \cite{GonLan}, one can apply the non-equivariant \cite[Theorem 5]{Schwarz} to the fibration $\pi'':P'(X)/\Z/2\to B(X,2)$ to get an upper bound for $\TC^S(X)$. When $X=M$, an $s$-connected closed smooth manifold, the configuration space $B(M,2)$ has the homotopy type of a complex of dimension $2\,\dim M-1$ and therefore
\[
\TC^\Sigma(M)\le \TC^S(M)< \frac{2\,\dim\, M}{s+1} + 1 = \frac{2\,\dim \, M +1}{s+1} + \frac{s}{s+1}.
\]
This upper bound for $\TC^\Sigma(M)$ is improved on by Theorem \ref{thm:upper}, as long as $2\,\dim(M)$ is not a multiple of $s+1$. This suggests that when looking for spaces with $\TC^\Sigma(X)<\TC^S(X)$, one might look at $2$-connected manifolds, or non-manifold polyhedra. The difficulty in finding such examples is that the known lower bounds for $\TC^S(X)$ from \cite{FG06} are valid only for manifolds, and are anyway also lower bounds for $\TC^\Sigma(X)$.
\end{rem}

\begin{cor}
Let $X$ be any $1$-connected, closed symplectic manifold. Then $$\TC^\Sigma(X) = \dim\,X.$$
\end{cor}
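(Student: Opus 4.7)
The strategy is the classical sandwich: show $\TC^\Sigma(X)\le \dim X$ using the new upper bound \thmref{thm:upper}, and show $\TC^\Sigma(X)\ge \dim X$ using the inequality $\TC(X)\le\TC^\Sigma(X)$ together with Farber's zero-divisor cup-length computation for $\TC$ of a symplectic manifold. No new ideas beyond what is already assembled in the paper should be required.

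For the upper bound, I would just feed the hypotheses into \thmref{thm:upper}. A closed smooth manifold admits a triangulation (Whitehead), so it is a polyhedron. Setting $s=1$ and $n=\dim X$ yields
\[
\TC^\Sigma(X) \;<\; \frac{2n+1}{2} \;=\; n + \tfrac{1}{2},
\]
and integrality forces $\TC^\Sigma(X)\le n = \dim X$.

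For the lower bound, I would use the chain $\TC^\Sigma(X)\ge \TC(X)$ observed in the introduction and reduce the problem to the standard fact that $\TC(M)=\dim M$ for a simply connected closed symplectic manifold. Writing $\dim X = 2k$ and letting $[\omega]\in H^2(X;\R)$ be the symplectic class, I would consider the zero-divisor
\[
\bar\omega \;=\; 1\otimes[\omega] - [\omega]\otimes 1 \;\in\; H^2(X\times X;\R),
\]
which vanishes on the diagonal. Because $[\omega]^k$ is a nonzero top class while $[\omega]^{k+j}=0$ for $j\ge 1$, a binomial expansion of $\bar\omega^{2k}$ leaves only the middle term,
\[
\bar\omega^{2k} \;=\; \binom{2k}{k}(-1)^k\,[\omega]^k\otimes[\omega]^k \;\neq\; 0
\]
in $H^{4k}(X\times X;\R)$. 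The standard zero-divisor cup-length lower bound for $\TC$ then yields $\TC(X)\ge 2k=\dim X$.

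I do not anticipate any real obstacle: once the paper's \thmref{thm:upper} is in place, the matching lower bound is a direct invocation of the well-known $\TC$ estimate for symplectic manifolds via the elementary inequality $\TC\le\TC^\Sigma$, and there is no need to appeal to the more subtle equivariant lower bounds \thmref{thm:lowerSP} or \thmref{thm:lowerMonoidal}.
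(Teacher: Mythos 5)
Your proposal is correct and follows the same route as the paper: the upper bound is Theorem \ref{thm:upper} with $s=1$, and the lower bound is $\TC(X)\le\TC^\Sigma(X)$ combined with $\TC(X)=\dim X$ for simply connected closed symplectic manifolds (the paper simply cites \cite[Corollary 3.2]{FTY} where you write out the zero-divisor computation $\bar\omega^{2k}=\binom{2k}{k}(-1)^k\,[\omega]^k\otimes[\omega]^k\neq 0$ explicitly, which is exactly the content of that citation).
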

\begin{proof}
The lower bound follows from \cite[Corollary 3.2]{FTY} which gives
\[
\dim\, X = \TC(X)\le \TC^\Sigma(X),
\]
and the upper bound follows from Theorem \ref{thm:upper}.
\end{proof}

We next describe a cohomological lower bound for $\TC^\Sigma(X)$ in terms of the symmetric square construction.

\begin{lem}[{Compare \cite[Lemma 18.1]{Far06}}]\label{lem:sectioniffdiagonal}
An invariant open subset $U\subseteq X\times X$ admits a local $G$-section of $\pi$ if and only if the inclusion $U\hookrightarrow X\times X$ is $G$-homotopic to a map with values in the diagonal $\Delta X\subseteq X\times X$.
\end{lem}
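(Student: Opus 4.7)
The plan is to mimic Farber's argument in the non-equivariant case, but with each construction tweaked so as to be $G$-equivariant, where the key observation is that a section $\sigma\colon U\to PX$ of $\pi$ is $G$-equivariant precisely when $\sigma(b,a)$ is the reverse of $\sigma(a,b)$, and the midpoint $1/2\in I$ is fixed under this involution. For the easy direction ($\Rightarrow$), given a local $G$-section $\sigma\colon U\to PX$ I would define the $G$-homotopy $H\colon U\times I\to X\times X$ by
\[
H(u,t)=\Bigl(\sigma(u)\bigl(\tfrac{1-t}{2}\bigr),\ \sigma(u)\bigl(\tfrac{1+t}{2}\bigr)\Bigr),
\]
with $G$ acting trivially on $I$. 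At $t=1$ this recovers the inclusion $U\hookrightarrow X\times X$, while at $t=0$ the image lies in $\Delta X$ since both coordinates equal $\sigma(u)(1/2)$. The equivariance $\sigma(gu)=\overline{\sigma(u)}$ translates the swap on the first factor into the substitution $s\mapsto 1-s$ inside $\sigma(u)$, which interchanges the two arguments $\tfrac{1\pm t}{2}$ and hence matches the transposition action on $X\times X$; this is exactly the computation that the symmetric midpoint parametrization was engineered to satisfy.

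For the harder direction ($\Leftarrow$), I would use the $G$-homotopy lifting property of $\pi$, which was established in Example~\ref{ex:Gfibration}. Suppose $H\colon U\times I\to X\times X$ is a $G$-homotopy (with trivial action on $I$) from a map $f\colon U\to \Delta X$ at $t=0$ to the inclusion at $t=1$. Writing $f(u)=(c(u),c(u))$, define an initial lift $\tilde f\colon U\to PX$ by $\tilde f(u)=\mathrm{const}_{c(u)}$. This is well-defined, continuous, satisfies $\pi\tilde f=f$, and is $G$-equivariant because constant paths are fixed by the reversal involution. Applying the $G$-HLP to the diagram with initial data $\tilde f$ and homotopy $H$ produces a $G$-map $\tilde H\colon U\times I\to PX$ covering $H$, and the restriction $\sigma:=\tilde H(-,1)\colon U\to PX$ is then the desired local $G$-section of $\pi$ over $U$.

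The only potentially delicate point is the choice of initial $G$-lift in the lifting step; the constant-path lift is available precisely because $\Delta X=(X\times X)^G$ and $\mathrm{const}_x\in (PX)^G$, so the obstruction that would otherwise appear simply does not arise. Once this canonical lift on the diagonal is in hand, the equivariant homotopy lifting property does all the work, and no further technicalities intervene.
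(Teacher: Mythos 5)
Your proof is correct. The direction ``section $\Rightarrow$ homotopy'' is essentially the paper's own argument: your formula $H(u,t)=\bigl(\sigma(u)(\tfrac{1-t}{2}),\sigma(u)(\tfrac{1+t}{2})\bigr)$ is the paper's homotopy run backwards in $t$, and your equivariance check is the same computation. The other direction is where you genuinely diverge: the paper constructs the local $G$-section explicitly by concatenation, setting $\sigma(x,y)(t)=p_1H(x,y,2t)$ for $t\le\tfrac12$ and $p_2H(x,y,2-2t)$ for $t\ge\tfrac12$, and verifying $\sigma(y,x)(t)=\sigma(x,y)(1-t)$ by hand; you instead invoke the $G$-homotopy lifting property of $\pi$ with the constant-path initial lift over the image of $U$ in $\Delta X$. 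Your route is legitimate here because Example~\ref{ex:Gfibration} establishes $\pi$ as a genuine (Hurewicz-type) $G$-fibration via a $G$-lifting function, so the $G$-HLP holds for the arbitrary $G$-space $U$, and your observation that the constant-path lift is automatically equivariant (since $\Delta X=(X\times X)^G$ and constant paths are reversal-fixed) is exactly the right point to isolate. What the paper's explicit construction buys, beyond independence from the fibration property, is a concrete section whose behaviour on the diagonal is visible: this is what lets the author ``follow through the construction'' in the proof of Theorem~\ref{thm:lowerMonoidal} to upgrade the lemma to the monoidal setting (section restricting to $c$ on $\Delta X$ iff $\Delta X$ is a strong $G$-deformation retract of $U$). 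With your HLP argument that refinement would require the relative lifting statement of Proposition~\ref{prop:relGHLP} rather than being immediate.
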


\begin{proof}
Let $H:U\times I\to X\times X$ be a $G$-homotopy satisfying $H(x,y,0)=(x,y)$ and $H(x,y,1)\in \Delta X$ for all $(x,y)\in U$. Then we may define a local $G$-section $\sigma:U\to X^I$ by setting
\[
\sigma(x,y)(t)= \left\{\begin{array}{ll} p_1 H(x,y,2t), & 0\le t\le \frac12 \\ p_2 H(x,y,2-2t), & \frac12\le t\le 1 \end{array}\right.
\]
where $p_i: X\times X\to X$ for $i=1,2$ denotes projection onto the $i$-th coordinate. One easily checks that $\sigma(y,x)(t)=\sigma(x,y)(1-t)$, so that $\sigma$ is a $G$-section as claimed.

Conversely, suppose we have a local $G$-section $\sigma:U\to X^I$ of $\pi$. Then define a $G$-homotopy $H:U\times I\to X\times X$ by setting
\[
H(x,y,t)=\big( \sigma(x,y)(t/2), \sigma(x,y)(1-t/2)\big)\qquad (x,y)\in U, \,t\in I.
\]
Again, it is easily checked that $H$ has the required properties.
\end{proof}

Let $SP^2(X)=(X\times X)/G$ denote the \emph{symmetric square} of $X$ (the orbit space of our involution on $X\times X$). Let $dX\subseteq SP^2(X)$ denote the image of the diagonal subspace $\Delta(X)\subseteq X\times X$ under the orbit projection map $\rho: X\times X\to SP^2(X)$.

 \begin{thm}\label{thm:lowerSP}
 Suppose there are classes $x_1,\ldots ,x_k\in H^*(SP^2(X))$ (with arbitrary coefficients) such that:
 \be
 \item $x_i$ restricts to zero in $H^*(dX)$ for $i=1,\ldots, k$;
 \item $0\neq x_1\cdots x_k \in H^*(SP^2(X))$.
 \ee
 Then $\TC^\Sigma(X)\ge k$.
 \end{thm}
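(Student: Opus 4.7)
The plan is to mimic the classical cup length argument for sectional category, but carried out downstairs on the symmetric square $SP^2(X)$ rather than equivariantly on $X\times X$. The key linking device is Lemma \ref{lem:sectioniffdiagonal}, which reinterprets the existence of a local $G$-section over an invariant open set $U\subseteq X\times X$ as a $G$-homotopy of the inclusion $U\hookrightarrow X\times X$ to a map landing in $\Delta X$.

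Here are the steps I would carry out. Suppose for contradiction that $\TC^\Sigma(X)\le k-1$, so there is an invariant open cover $U_1,\ldots,U_k$ of $X\times X$ on which $\pi$ admits $G$-sections. By Lemma \ref{lem:sectioniffdiagonal}, for each $i$ there is a $G$-homotopy $H_i\colon U_i\times I\to X\times X$ from the inclusion to a map into $\Delta X$. Set $V_i:=U_i/G\subseteq SP^2(X)$; since $I$ carries the trivial action, each $H_i$ descends to an honest homotopy $\bar H_i\colon V_i\times I\to SP^2(X)$ from the inclusion $V_i\hookrightarrow SP^2(X)$ to a map with image contained in $dX$. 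The $V_i$ form an open cover of $SP^2(X)$.

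Now I would run the standard relative cup length argument. Since each inclusion $V_i\hookrightarrow SP^2(X)$ is homotopic to a map factoring through $dX$, and since by hypothesis $x_i$ restricts to $0$ in $H^*(dX)$, we get $x_i|_{V_i}=0$ in $H^*(V_i)$ for each $i=1,\ldots,k$. Hence there exist relative classes $\tilde x_i\in H^*(SP^2(X),V_i)$ mapping to $x_i$ under the natural map $H^*(SP^2(X),V_i)\to H^*(SP^2(X))$. Taking the cup product of these relative classes yields
\[
\tilde x_1\cdots\tilde x_k\in H^*\bigl(SP^2(X),\,V_1\cup\cdots\cup V_k\bigr)=H^*\bigl(SP^2(X),SP^2(X)\bigr)=0,
\]
which under the forgetful map sends to $x_1\cdots x_k\in H^*(SP^2(X))$, forcing $x_1\cdots x_k=0$. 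This contradicts hypothesis (ii), so $\TC^\Sigma(X)\ge k$.

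The main technical point to check is the descent from equivariant open sets and $G$-homotopies on $X\times X$ to their counterparts on $SP^2(X)$. This is essentially formal once one notes that the orbit projection $\rho\colon X\times X\to SP^2(X)$ is open, that $\rho(U_i)=V_i$ is open, and that a $G$-map out of a $G$-space with trivial target action factors uniquely through the orbit space; the only mild subtlety is that the action of $G$ on $X\times X$ is not free along the diagonal, but since the homotopies end in $\Delta X$ (which descends to $dX$) this causes no trouble. Everything else is the routine relative-cup-length machinery used in the Schwarz-Farber lower bound for $\secat$, and no new ideas are needed beyond Lemma \ref{lem:sectioniffdiagonal} itself.
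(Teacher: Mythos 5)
Your proposal is correct and follows essentially the same route as the paper's proof: invoke Lemma \ref{lem:sectioniffdiagonal}, descend the $G$-homotopies to the quotients $\bar U_i=\rho(U_i)\subseteq SP^2(X)$ (the paper justifies this descent via local compactness of $I$), and then run the standard relative cup-length argument to derive the contradiction. No substantive differences.
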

 \begin{proof}
 Given that classes as in the statement exist, suppose for a contradiction that $\TC^\Sigma(X)<k$. Then we may cover $X\times X$ by invariant open subsets $U_1,\ldots , U_k$, on each of which $\pi$ admits a local $G$-section. By Lemma \ref{lem:sectioniffdiagonal} we have, for each $i=1,\ldots, k$, a $G$-homotopy $H_i:U_i\times I\to X\times X$ from the inclusion to a map with values in the diagonal subspace $\Delta X\subseteq X\times X$. Let $\bar{U_i}:=\rho(U_i)$ be the quotient space of $U_i$. Since $I$ is locally compact, the product $\bar{U_i}\times I$ has the quotient topology from $U_i\times I$, hence $H_i$ induces a homotopy $\bar{H_i}:\bar{U_i}\times I\to SP^2(X)$. It is clear that $\bar{H_i}$ is a homotopy from the inclusion $\bar{U_i}\hookrightarrow SP^2(X)$ to a map with values in $dX\subseteq SP^2(X)$.

  By our assumption (1) and the long exact cohomology sequence of the pair $(SP^2(X),\bar{U_i})$, it follows that each $x_i$ comes from a class $\tilde{x}_i\in H^*(SP^2(X),\bar{U_i})$. By naturality of cup products, $x_1\cdots x_k$ is therefore the image of a class $$\tilde{x}_1\cdots\tilde{x}_k\in H^*(SP^2(X),\bar{U_1}\cup\cdots\cup\bar{U_k})=H^*(SP^2(X),SP^2(X))=0.$$ Hence  $x_1\cdots x_k=0$, contradicting assumption (2).
  \end{proof}

\section{Monoidal symmetrized topological complexity}\label{sec:MonoidalSymmTC}

Iwase and Sakai \cite{IS1,IS2} defined a variant of topological complexity, which they called \emph{monoidal topological complexity} and denoted $\TC^M(X)$, in which the motion from $A$ to $A$ is required to be constant at $A$. Their paper \cite{IS1} included a proof that $\TC^M(X)=\TC(X)$ for all locally finite simplicial complexes $X$, which was later found to contain an error \cite{IS2}. The question of whether $\TC^M(X)=\TC(X)$ for all spaces $X$ has become known as the Iwase-Sakai conjecture, and remains unanswered (although recent progress has been made, see \cite{CCV,Dran}). Here we define a monoidal version of symmetrized topological complexity. Unlike in the ordinary case, it is not too hard to show that the monoidal and non-monoidal symmetrized topological complexities coincide for a large class of spaces.

Recall that $\Delta X\subseteq X\times X$ denotes the diagonal subspace. This is an invariant subspace which admits a canonical local $G$-section $c:\Delta X\to PX$ of $\pi:PX \to X\times X$, given by setting $c(x,x)$ to be the constant path at $x$.

\begin{defn}\label{def:MSTC}
The \emph{monoidal symmetrized topological complexity} of $X$, denoted $\TC^{M,\Sigma}(X)$, is the least integer $k$ such that $X\times X$ may be covered by invariant open sets $U_0,U_1,\ldots , U_k$, each of which contains $\Delta X$ and admits a local $G$-section $\sigma_i:U_i\to PX$ of $\pi$ such that $\sigma_i|_{\Delta X} =c$.
\end{defn}

\begin{thm}\label{thm:MSvsS}
Let $X$ be a paracompact ENR. Then
\[
\TC^{M,\Sigma}(X)=\TC^\Sigma(X).
\]
\end{thm}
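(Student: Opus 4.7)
The inequality $\TC^{M,\Sigma}(X) \geq \TC^\Sigma(X)$ is immediate from the definitions, so I focus on the reverse. Suppose $n = \TC^\Sigma(X)$ and let $\{U_0, \ldots, U_n\}$ be a $G$-invariant open cover of $X \times X$ with $G$-sections $\sigma_i \colon U_i \to PX$ of $\pi$. The goal is to produce an invariant open cover $\{V_0, \ldots, V_n\}$ with each $V_i \supseteq \Delta X$, together with $G$-sections $\tilde\sigma_i \colon V_i \to PX$ satisfying $\tilde\sigma_i|_{\Delta X} = c$.

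First I produce a canonical local $G$-section near the diagonal. By Corollary \ref{cor:symLEC}, $(X \times X, \Delta X)$ is a $G$-cofibration, and by Lemma \ref{lem:Gcofibr} a $G$-NDR pair; let $(h, \phi)$ be the associated data, with $W = \phi^{-1}([0,1))$ an invariant open neighborhood of $\Delta X$ and $h \colon W \times I \to X \times X$ a $G$-homotopy from $\mathrm{id}_W$ to a map into $\Delta X$, stationary on $\Delta X$. Writing $h = (h_1, h_2)$ and noting $h_1(x,y,1) = h_2(x,y,1)$, the concatenation of $s \mapsto h_1(x, y, s)$ with $s \mapsto h_2(x, y, 1-s)$ defines a continuous $G$-section $\tau \colon W \to PX$ of $\pi$ satisfying $\tau|_{\Delta X} = c$, just as in Example \ref{ex:Gfibration}'s description of $\pi^G$.

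Next I modify each $\sigma_i$ to agree with $\tau$ on a $G$-invariant open neighborhood of $\Delta X \cap U_i$ in $U_i \cap W$. This proceeds in two stages, both using the relative $G$-homotopy lifting property of Proposition \ref{prop:relGHLP} together with the fact that the $G$-NDR structure on $(X \times X, \Delta X)$ restricts to a $G$-NDR structure on $(U_i, \Delta X \cap U_i)$ (after shrinking the deformation neighborhood inside $U_i$, which is possible by openness of $U_i$ and compactness of $I$). In the first stage I upgrade $\sigma_i$ to a $G$-section $\sigma_i'$ with $\sigma_i'|_{\Delta X \cap U_i} = c$; the required $G$-homotopy of sections over $\Delta X \cap U_i$ exists because Example \ref{ex:Gfibration} identifies $(PX)^G$ with the contractible based path space of $X$, so the fixed-point fibration $\pi^G$ has contractible fibres and any two $G$-sections over $\Delta X$ are $G$-homotopic through $G$-sections. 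In the second stage, shrink $W$ further so that $h$ restricts to a $G$-deformation of $W \cap U_i$ onto $\Delta X \cap U_i$; since $\sigma_i'$ and $\tau$ then restrict on $W \cap U_i$ to $G$-sections agreeing on this $G$-deformation retract, a standard lifting argument produces a rel-$\Delta X \cap U_i$ $G$-homotopy of $G$-sections between them, and another application of the relative $G$-HLP replaces $\sigma_i'$ with a $G$-section $\sigma_i''$ agreeing with $\tau$ on some invariant open neighborhood $N_i$ of $\Delta X \cap U_i$ inside $U_i \cap W$.

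Finally, pick a $G$-invariant open neighborhood $W_i \subseteq W$ of $\Delta X$ with $W_i \cap U_i \subseteq N_i$ (using the Urysohn function from the $G$-NDR data) and set $V_i = U_i \cup W_i$. Then $\tilde\sigma_i \colon V_i \to PX$ defined by $\sigma_i''$ on $U_i$ and $\tau$ on $W_i$ is a well-defined, continuous, $G$-equivariant section of $\pi$ with $\tilde\sigma_i|_{\Delta X} = c$, since the two prescriptions agree on the overlap $U_i \cap W_i \subseteq N_i$. I expect the main technical obstacle to be the second stage of the modification step, namely upgrading pointwise agreement of $\sigma_i'$ with $\tau$ on the fixed set $\Delta X \cap U_i$ to agreement on a full invariant neighborhood; the key enabling ingredients are the restrictability of the $G$-NDR structure and, critically, the contractibility of $(PX)^G$, which is the symmetric phenomenon that fails in the (conjectural) non-equivariant Iwase--Sakai setting and is precisely what makes $\TC^{M,\Sigma}$ coincide with $\TC^\Sigma$.
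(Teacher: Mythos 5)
Your overall strategy --- patching each local section directly so that it becomes constant on the diagonal and then enlarging each $U_i$ by a neighbourhood of $\Delta X$ --- is genuinely different from the paper's proof, which works with a single global $G$-section of the fibred join $\pi_k:J^k_{X\times X}(PX)\to X\times X$, homotopes it to agree with $c_k$ on $\Delta X$ by one application of Proposition \ref{prop:relGHLP}, and only then reads off the open cover (whose members automatically all contain the diagonal, since every join coordinate of $c_k$ is positive). Your first two stages are essentially sound, if technically delicate: the restriction of the $G$-NDR structure, the vertical $G$-homotopy of sections over $\Delta X\cap U_i$ coming from contractibility of the fibre of $\pi^G$, and the upgrade from agreement on the fixed set to agreement on a neighbourhood can all be pushed through (the last is cleaner with a cut-off function $\mu$ supported in $N_i$, setting $\sigma_i''=K(\,\cdot\,,\mu(\,\cdot\,))$, than with a second appeal to relative $G$-HLP).

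The genuine gap is in the final gluing step: an invariant open $W_i\supseteq\Delta X$ with $W_i\cap U_i\subseteq N_i$ need not exist. Your $N_i$ is only a neighbourhood of $\Delta X\cap U_i$ \emph{inside} $U_i$; it gives no control over points of $U_i$ that accumulate on $\Delta X\setminus U_i$. The standard first chart of a motion planner, $U_i=X\times X\setminus\Delta X$, already kills the argument: there $\Delta X\cap U_i=\emptyset$, so $N_i=\emptyset$, and the requirement $W_i\cap U_i\subseteq N_i$ forces $W_i\subseteq\Delta X$, which is impossible for an open neighbourhood of $\Delta X$ when $\dim X>0$. On such a $U_i$ the given section near the diagonal can be arbitrarily far from constant paths, so $U_i\cup W_i$ admits no glued section without modifying $\sigma_i$ on a region not contained in $U_i$'s intersection with any neighbourhood of $\Delta X\cap U_i$. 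The gap is repairable: first shrink the cover to $\{U_i'\}$ with $\overline{U_i'}\subseteq U_i$ (possible since $X\times X$ is metrizable, hence normal, and the shrinking can be made invariant). Then $\overline{U_i'\setminus N_i}$ is disjoint from $\Delta X$ --- any diagonal point of $\overline{U_i'}$ lies in $\Delta X\cap U_i\subseteq N_i$ and hence has a neighbourhood avoiding $U_i'\setminus N_i$ --- so normality provides an invariant $W_i\supseteq\Delta X$ with $W_i\cap U_i'\subseteq N_i$, and your gluing goes through with $V_i=U_i'\cup W_i$. Without this shrinking step the proof as written is incorrect; with it, your argument becomes a valid, more hands-on alternative to the paper's fibred-join proof, at the cost of considerably more point-set bookkeeping.
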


Theorem \ref{thm:MSvsS} will form the basis of our calculation of $\TC^\Sigma(S^1)$ in the next section. Before giving its proof, we describe a cohomological lower bound for $\TC^{M,\Sigma}(X)$ using the symmetric square, analogous to Theorem \ref{thm:lowerSP}.

\begin{thm}\label{thm:lowerMonoidal}
Suppose there are relative classes $x_1,\ldots ,x_k\in H^*(SP^2(X),dX)$ (with arbitrary coefficients) such that
$$0\neq x_1\cdots x_k \in H^*(SP^2(X),dX).$$ Then $\TC^{M,\Sigma}(X)\ge k$.
 \end{thm}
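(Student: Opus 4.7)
The plan is to mirror the proof of Theorem \ref{thm:lowerSP}, but working consistently in relative cohomology of the pair $(SP^2(X), dX)$ and exploiting the extra rigidity that the monoidal condition provides on the diagonal. First I would sharpen Lemma \ref{lem:sectioniffdiagonal} by the following observation: if the local $G$-section $\sigma : U \to PX$ satisfies $\sigma|_{\Delta X} = c$, then the explicit $G$-homotopy $H(x,y,t) = (\sigma(x,y)(t/2), \sigma(x,y)(1-t/2))$ constructed there is constant on $\Delta X$, so it gives a $G$-homotopy from the inclusion $U \hookrightarrow X \times X$ to a map landing in $\Delta X$ that is \emph{rel $\Delta X$}.

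Now suppose for contradiction that $\TC^{M,\Sigma}(X) < k$, so we have invariant open sets $U_1, \ldots, U_k$ covering $X \times X$, each containing $\Delta X$ and equipped with a local $G$-section of $\pi$ restricting to $c$ on $\Delta X$. Set $\bar{U}_i := \rho(U_i) \subseteq SP^2(X)$; these are open (since the $U_i$ are $G$-saturated), contain $dX$, and cover $SP^2(X)$. Exactly as in the proof of Theorem \ref{thm:lowerSP}, each rel-$\Delta X$ homotopy $H_i$ descends to a homotopy $\bar{H}_i : \bar{U}_i \times I \to SP^2(X)$ from the inclusion to a map with values in $dX$, which is now rel $dX$.

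The key upgrade is that $\bar{H}_i$ can be read as a homotopy of maps of pairs $(\bar{U}_i, dX) \to (SP^2(X), dX)$, exhibiting the inclusion of pairs as homotopic to a map factoring through $(dX, dX)$. Since $H^*(dX, dX) = 0$, it follows that each relative class $x_i \in H^*(SP^2(X), dX)$ restricts to zero in $H^*(\bar{U}_i, dX)$. The cohomology long exact sequence of the triple $(SP^2(X), \bar{U}_i, dX)$ then produces a lift $\tilde{x}_i \in H^*(SP^2(X), \bar{U}_i)$ of $x_i$.

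By naturality of the relative cup product the element $\tilde{x}_1 \cdots \tilde{x}_k$ lives in $H^*(SP^2(X), \bar{U}_1 \cup \cdots \cup \bar{U}_k) = H^*(SP^2(X), SP^2(X)) = 0$, and restricts, via the inclusion of pairs $(SP^2(X), dX) \hookrightarrow (SP^2(X), \bigcup_i \bar{U}_i)$, to $x_1 \cdots x_k \in H^*(SP^2(X), dX)$. Hence $x_1 \cdots x_k = 0$, contradicting the hypothesis. The only real point of departure from Theorem \ref{thm:lowerSP} is the first step: verifying that the monoidal constraint $\sigma_i|_{\Delta X} = c$ is what promotes the deformation produced by Lemma \ref{lem:sectioniffdiagonal} to one that is rel $\Delta X$, since everything downstream then amounts to standard naturality of the long exact sequence of a triple together with the relative cup product pairing.
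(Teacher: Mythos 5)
Your proof is correct and follows essentially the same route as the paper: both exploit that the monoidal condition $\sigma_i|_{\Delta X}=c$ makes the homotopy of Lemma \ref{lem:sectioniffdiagonal} constant on the diagonal, pass to the quotients $\bar{U}_i\subseteq SP^2(X)$, lift each $x_i$ through the long exact sequence of the triple $(SP^2(X),\bar{U}_i,dX)$, and kill the product in $H^*(SP^2(X),\bigcup_i\bar{U}_i)=0$. Your variant of the diagonal step---arguing that the inclusion of pairs $(\bar{U}_i,dX)\hookrightarrow(SP^2(X),dX)$ is homotopic through maps of pairs to one factoring through $(dX,dX)$, rather than asserting that $dX$ is a strong deformation retract of $\bar{U}_i$---is a minor (and if anything slightly more careful) rephrasing, since the homotopy produced by the lemma takes values in $X\times X$ rather than in $U_i$.
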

 \begin{proof}
Following through the construction of Lemma \ref{lem:sectioniffdiagonal}, one sees that an open invariant set $U\subseteq X\times X$ containing $\Delta X$ admits a local $G$-section $\sigma:U\to PX$ of $\pi$ satisfying $\sigma|_{\Delta X}=c$ if, and only if, $\Delta X\subseteq U$ is a strong $G$-deformation retract.

Now suppose $\TC^{M,\Sigma}(X)<k$, as witnessed by a cover of $X\times X$ by invariant open sets $U_1,\ldots , U_k$ as above. For each $i=1,\ldots, k$, it follows as in the proof of Theorem \ref{thm:lowerSP} that the quotient $\bar{U_i}\subseteq SP^2(X)$ contains $dX$ as a strong deformation retract. Examining the long exact sequence of the triple
\[
\xymatrix{
\cdots \ar[r] & H^*(SP^2(X),\bar{U_i}) \ar[r] & H^*(SP^2(X),dX) \ar[r] & H^*(\bar{U_i},dX) \ar[r] & \cdots
}
\]
one sees that each $x_i$ in the statement comes from a class $\tilde{x}_i\in H^*(SP^2(X),\bar{U_i})$. Since the $\bar{U_i}$ cover $SP^2(X)$, we have $\tilde{x_1}\cdots\tilde{x_k}=0$. Hence by naturality $x_1\cdots x_k=0$, a contradiction.
\end{proof}

The rest of this section is devoted to proving Theorem \ref{thm:MSvsS}. We invite the reader to compare Section 2 of \cite{Dran}, where the equality $\TC^M(X)=\TC(X)$ is proved for $s$-connected simplicial complexes $X$ satisfying $(s + 1)(\TC(X) + 1) > \dim \,X + 1$.

 By Proposition \ref{prop:Gfibjoin}, when $X$ is paracompact we have $\TC^\Sigma(X)\le k$ if and only if the $(k+1)$-fold fibred join
\[
\pi_k:J^k_{X\times X}(PX)\to X\times X
\]
admits a $G$-section. Note that the canonical local $G$-section $c:\Delta X\to PX$ induces a local $G$-section $c_k:\Delta X\to J^k_{X\times X}(PX)$ given by
\[
c_k(x,x) = \left[c(x,x),\frac{1}{k+1},\ldots , c(x,x),\frac{1}{k+1}\right],
\]
where as above $c(x,x)$ is the constant path at $x$.

\begin{lem}\label{lem:GfibjoinMonoidal}
Let $X$ be a paracompact ENR, and let $\sigma:X\times X\to J^k_{X\times X}(PX)$ be a $G$-section of $\pi_k$. Then
\[
\sigma|_{\Delta X}\simeq c_k:\Delta X\to J^k_{X\times X}(PX).
\]
\end{lem}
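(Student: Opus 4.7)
The plan is to reduce the problem to a statement about sections of a fibration with contractible fibres over $\Delta X$. Since $G$ acts trivially on $\Delta X$, every $G$-equivariant map $\Delta X \to J^k_{X\times X}(PX)$ factors through the subspace of $G$-fixed points, so both $\sigma|_{\Delta X}$ and $c_k$ are sections of
\[
p^G\colon \big(J^k_{X\times X}(PX)\big)^G \cap \pi_k^{-1}(\Delta X) \longrightarrow \Delta X,
\]
the restriction to $\Delta X$ of the fixed-point subfibration of $\pi_k$. The first step is to analyse $p^G$ and show that its fibres are contractible.

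Taking $G$-fixed points commutes with the fibred join construction, since a point $[e_0,t_0,\ldots,e_k,t_k]$ of $J^k_B(E)$ is $G$-fixed if and only if $e_i\in E^G$ for every $i$ with $t_i>0$. Hence the fibre of $p^G$ over $(x,x)$ is $J^k((\Omega_x X)^G)$. By Example \ref{ex:Gfibration}, $(\Omega_x X)^G$ is homeomorphic to the based path space $P_xX$ via the first-half map, hence is contractible; the iterated join of a contractible space with itself is contractible, so all fibres of $p^G$ are contractible. Moreover, $p^G$ is a Hurewicz fibration: the $G$-lifting function $\lambda_{\pi_k}$ furnished by Lemma \ref{lem:HurGfibr} restricts to one for $p^G$, because any path in $\Delta X$ is $G$-fixed and therefore its lift starting at a $G$-fixed point is itself $G$-fixed.

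Since $\Delta X\cong X$ is paracompact, Dold's theorem on fibrations with contractible fibres then applies: $p^G$ is a fibre homotopy equivalence, and in particular any two of its sections are fibrewise homotopic. Applying this to $\sigma|_{\Delta X}$ and $c_k$ produces a homotopy $H\colon \Delta X\times I \to \big(J^k_{X\times X}(PX)\big)^G$, which is automatically $G$-equivariant because its image lies pointwise in the $G$-fixed set. The main obstacle I anticipate is the invocation of Dold's theorem, which requires some care about what kind of fibration $p^G$ really is and what hypotheses on $\Delta X$ actually suffice. As a hands-on alternative one can build $H$ explicitly in two stages: first use the standard contraction $\alpha\mapsto \alpha(u\cdot -)$ of $P_xX$ (transported via Example \ref{ex:Gfibration}) to obtain a symmetry- and endpoint-preserving deformation of each loop component of $\sigma|_{\Delta X}$ down to $c_x$, then linearly deform the join parameters to $(\tfrac{1}{k+1},\ldots,\tfrac{1}{k+1})$ inside the canonical simplex $J^k(\{c_x\})\cong \Delta^k$.
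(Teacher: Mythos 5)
Your argument is correct and follows essentially the same route as the paper: restrict to the $G$-fixed-point subfibration over $\Delta X$, note its fibre $J^k((\Omega_xX)^G)$ is a join of contractible spaces and hence contractible, conclude the fibration is fibre-homotopy trivial over the paracompact base, and deduce that all sections (in particular $\sigma|_{\Delta X}$ and $c_k$) are homotopic, the homotopy being automatically equivariant since it lives in the fixed-point set. The only cosmetic difference is that you invoke Dold's theorem where the paper cites Pave\v{s}i\'c's Proposition 2.2 for fibre-homotopy triviality of fibrations with contractible fibres.
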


\begin{proof}
Since $\sigma$ is $G$-equivariant, it must take fixed points to fixed points, and it follows that $\sigma(\Delta X)\subseteq J^k_{X\times X}(PX)^G=J^k_{X\times X}(PX^G)$. Hence both $\sigma|_{\Delta X}$ and $c_k$ are sections of the fibration
\[
\xymatrix{
J^k(\Omega X^G)\ar[r] & J^k_{X\times X}(PX^G) \ar[r]^-{\pi_k^G} & \Delta X
}
\]
obtained from $\pi_k$ by restricting to fixed points. The fibre $J^k(\Omega X^G)$ is a join of contractible spaces (as noted in the proof of Theorem \ref{thm:upper}), hence is contractible. Since $X$ is paracompact and locally contractible, it follows (from \cite[Proposition 2.2]{Pavesic}, for example) that $\pi_k^G$ is fibre-homotopically trivial. Hence, in particular, all of its sections are homotopic.
\end{proof}

\begin{proof}[Proof of Theorem \ref{thm:MSvsS}]
The inequality $\TC^{M,\Sigma}(X)\ge\TC^\Sigma(X)$ is obvious. Assume $\TC^\Sigma(X)\le k$, and let $\sigma:X\times X\to J^k_{X\times X}(PX)$ be a $G$-section of $\pi_k$. By Lemma \ref{lem:GfibjoinMonoidal}, there exists a homotopy $K:\Delta X\times I\to J_{X\times X}^k(PX)$ from $\sigma|_{\Delta X}$ to $c_k$, which may be regarded as a $G$-homotopy where $G$ acts trivially on $\Delta X$. Now we apply Proposition \ref{prop:relGHLP} to the square
\[
\xymatrix{
X\times X\times\{0\}\cup\Delta X\times I \ar[rr]^-{\sigma\cup K} \ar[d] & & J^k_{X\times X}(PX) \ar[d]^{\pi_k} \\
X\times X\times I \ar[rr]^-{H} \ar@{.>}[urr]^{F} & & X\times X
}
\]
where $H$ is the identity homotopy. Since $X$ is ENR, the diagonal $\Delta X\subseteq X\times X$ is a closed $G$-cofibration by Lemma \ref{cor:symLEC}, and $\pi_k$ is a $G$-fibration. We obtain a fibrewise $G$-homotopy $F:X\times X\times I\to J^k_{X\times X}(PX)$ from $\sigma$ to a $G$-section $\widetilde\sigma$ which equals $c_k$ on the diagonal.

Setting $U_i=\widetilde\sigma^{-1}t_i^{-1}(0,1]$ as in the proof of Proposition \ref{prop:Gfibjoin} gives a cover of $X\times X$ by invariant open sets $U_i$, each of which contains the diagonal and admits a local $G$-sections $\widetilde\sigma_i:U_i\to J^k_{X\times X}(PX)$ satisfying $\widetilde\sigma_i|_{\Delta X}=c$. Hence $\TC^{M,\Sigma}(X)\le k$, and we are done.
\end{proof}

\section{Symmetrized topological complexity of spheres}\label{sec:spheres}

In this section we calculate the symmetrized topological complexity of spheres. This is possible largely due to Nakaoka's calculations \cite{Nak56} of the mod $2$ cohomology ring of $SP^2(S^n)$, and a result of Morton \cite{Morton} which states that $SP^2(S^1)$ is homeomorphic to the M\"obius band.

\begin{figure}
\begin{tikzpicture}
\tikzset{->-/.style={decoration={
  markings,
  mark=at position .4 with {\arrow{>}},
  mark=at position .6 with {\arrow{>}}},postaction={decorate}}}

  \draw[thick,fill=gray!30](0,0) rectangle (3,2);
\draw[thick,->-](0,0) to (0,2);
\draw[thick,->-](3,2) to (3,0);
\draw[thick,red,smooth](0,.5) to [out=0,in=200](1.5,1) to [out=20,in=180](3,1.5);
\draw[thick,blue](0,1) to (3,1);
\end{tikzpicture}
\caption{Embedded curves in the M\"obius band $(M,\partial M)$, each representing the dual of a class in $H^1(M,\partial M;\Z/2)$ whose square is nonzero}
\label{fig:mobius}
\end{figure}
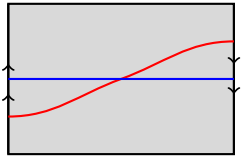

\begin{thm}\label{thm:spheres}
We have $\TC^\Sigma(S^n)=2$ for all $n\ge 1$.
\end{thm}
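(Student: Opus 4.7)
The strategy is to sandwich $\TC^\Sigma(S^n) = 2$ between matching upper and lower bounds, invoking Theorem \ref{thm:upper} for the upper bound and Theorems \ref{thm:lowerSP} and \ref{thm:lowerMonoidal} for the lower. The upper bound is immediate: since $S^n$ is an $(n-1)$-connected polyhedron of dimension $n$, Theorem \ref{thm:upper} gives
\[
\TC^\Sigma(S^n) < \frac{2n+1}{n} = 2 + \frac{1}{n},
\]
and hence $\TC^\Sigma(S^n) \leq 2$ for all $n \geq 1$.

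For the lower bound I would split into the cases $n=1$ and $n \geq 2$. When $n = 1$, I would combine Morton's theorem (identifying $SP^2(S^1)$ with a Möbius band $M$ and $dS^1$ with $\partial M$) with Theorem \ref{thm:MSvsS} (applicable as $S^1$ is a paracompact ENR) and Theorem \ref{thm:lowerMonoidal}. The task reduces to exhibiting $x \in H^1(M,\partial M; \Z/2)$ with $x \smile x \neq 0$; by Poincaré--Lefschetz duality, both $H^1$ and $H^2$ of the pair are isomorphic to $\Z/2$, and the cup square corresponds to the mod-$2$ self-intersection of the unique nonzero class in $H_1(M; \Z/2)$. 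Taking two transverse embedded representatives of this class (such as the two curves in Figure \ref{fig:mobius}), they meet in an odd number of points, forcing $x \smile x$ to generate $H^2(M,\partial M; \Z/2)$.

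When $n \geq 2$, I would apply Theorem \ref{thm:lowerSP} with mod-$2$ coefficients and $k=2$, using $x_1 = x_2 = u$ for a class $u \in H^n(SP^2(S^n); \Z/2)$ characterised by $\rho^*(u) = a+b$, where $\rho : S^n \times S^n \to SP^2(S^n)$ is the orbit projection and $a,b$ are the standard generators of $H^n(S^n \times S^n; \Z/2)$. Since $\Delta^*(a+b) = 2\iota_n = 0 \bmod 2$ and $\rho$ restricts to a homeomorphism $\Delta S^n \to dS^n$, any such $u$ automatically restricts to zero on $dS^n$. The existence of this $u$ and, crucially, the nonvanishing of $u \smile u \in H^{2n}(SP^2(S^n); \Z/2)$, would be extracted from Nakaoka's mod-$2$ cohomology calculations for symmetric squares of spheres in \cite{Nak56}.

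The hard part is confirming that $u^2 \neq 0$ when $n \geq 3$ is odd, where $SP^2(S^n)$ is not a manifold. The case $n = 2$ is easy since $SP^2(S^2) \cong \C P^2$ with $u$ the hyperplane class; indeed the entire even case can alternatively be handled by the elementary bound $\TC^\Sigma(S^n) \geq \TC(S^n) = 2$ coming from Farber's work. The substantive new content is therefore the reading-off of the cup-product structure on $H^*(SP^2(S^n); \Z/2)$ from Nakaoka's analysis in the odd case; everything else in the argument is formal.
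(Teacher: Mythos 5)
Your proposal is correct and follows essentially the same route as the paper: upper bound from Theorem~\ref{thm:upper}, the even case from $\TC^\Sigma\ge\TC$, the odd case $n>1$ from Theorem~\ref{thm:lowerSP} applied to Nakaoka's class with nonzero square (the paper cites \cite[Theorem 13.3, Lemma 11.3]{Nak56} for exactly the two facts you defer to Nakaoka), and the case $n=1$ from Morton's identification $(SP^2(S^1),dS^1)\cong(M,\partial M)$ together with Theorems~\ref{thm:MSvsS} and~\ref{thm:lowerMonoidal} and the nontrivial relative cup square in the M\"obius band. Your direct argument that $u$ restricts to zero on $dS^n$ (via $\Delta^*(a+b)=0$ mod $2$ and the homeomorphism $\Delta S^n\cong dS^n$) is a harmless, slightly more self-contained substitute for the paper's citation of Nakaoka's Lemma 11.3.
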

\begin{proof}
The upper bound $\TC^\Sigma(S^n)\le 2$ follows immediately from Theorem \ref{thm:upper}. When $n$ is even, the lower bound $\TC^\Sigma(S^n)\ge 2$ follows from $\TC^\Sigma(S^n)\ge \TC(S^n)$, as noted already in \cite{BGRT}.

For $n>1$ odd, we use Theorem \ref{thm:lowerSP} to obtain $\TC^\Sigma(S^n)\ge 2$. It suffices to find a class $x\in H^n(SP^2(S^n);\Z/2)$ whose image in $H^n(dX;\Z/2)$ is zero, and such that $0\neq x^2\in H^{2n}(SP^2(X);\Z/2)$. For this we refer to Nakaoka's work on the cohomology of cyclic products. In particular, Theorem 13.3 in \cite{Nak56} asserts the existence of a class $x=g_n(1) \in H^n(SP^2(S^n);\Z/2)$ whose square is nonzero if $n>1$, and Lemma 11.3 in \cite{Nak56} implies that $x$ restricts to zero in $H^n(dX;\Z/2)$.

When $n=1$, it follows from \cite{Morton} that there is a homeomorphism of pairs $(SP^2(S^1),dS^1)\cong (M,\partial M)$, where $M$ is the M\"obius band. Therefore there are no non-trivial cup products in $H^*(SP^2(S^1))$, and Theorem \ref{thm:lowerSP} is insufficient to conclude that $\TC^\Sigma(S^1)\ge 2$.

There is, however, a non-trivial cup product $$H^1(M,\partial M;\Z/2)\otimes H^1(M,\partial M;\Z/2)\to H^2(M,\partial M;\Z/2)$$
(Poincar\'e--Lefschetz dual to an intersection of transversely embedded curves, as depicted in Figure \ref{fig:mobius}) and so Theorems \ref{thm:MSvsS} and \ref{thm:lowerMonoidal} come to the rescue. We conclude that $\TC^\Sigma(S^1)=\TC^{M,\Sigma}(S^1)\ge 2$.

\end{proof}

\section{Symmetrized higher topological complexity}\label{sec:higher}

In this final section we state generalizations of our results to the \emph{symmetrized higher topological complexity}, introduced in \cite{BGRT}, whose definition is recalled below. Their proofs offer no new difficulties, hence are left as exercises for the interested reader.

Let $m\ge2$ be a natural number, and let $\Sigma_m$ denote the symmetric group on $m$ letters. Denote by $V_m$ the wedge sum of $m$ copies of the interval $I=[0,1]$, each with base point $0\in I$. (We avoid the notation $J_m$ for this space used in \cite{BGRT} and elsewhere, since it conflicts with our notation for the join.) The group $\Sigma_m$ acts on $V_m$ by permuting the intervals, and hence acts on the function space $X^{V_m}$, elements of which are $m$-tuples $\mathbf{\gamma}=(\gamma_1,\ldots , \gamma_m)$ of paths in $X$ with common initial point. The evaluation map
\[
e_m:X^{V_m}\to X^{m},\qquad e_m(\mathbf{\gamma})=\left(\gamma_1(1),\ldots , \gamma_m(1)\right)
\]
is a $\Sigma_m$-fibration, where $\Sigma_m$ acts on the $m$-fold Cartesian product $X^{m}$ by permuting the factors (compare Examples \ref{ex:interval} and \ref{ex:Gfibration}). We define the \emph{symmetrized $m$-th higher topological complexity} of a space $X$ to be
\[
\TC^{\Sigma}_m(X)=\secat_{\Sigma_m}(e_m),
\]
which obviously agrees with \cite[Definition 4.6]{BGRT}. Clearly, $\TC^\Sigma(X)=\TC^\Sigma_2(X)$.

The fibre of $e_m$ is $\Omega X^{m-1}$, and its various fixed point spaces are all either Cartesian powers of $\Omega X$, or are contractible. Hence Theorem \ref{thm:uppersecatG} applies to give the following upper bound, which generalizes Theorem \ref{thm:upper}.

\begin{thm}\label{thm:upperhigher}
Let $X$ be an $s$-connected polyhedron. Then
\[
\TC^\Sigma_m(X)< \frac{m\,\dim\,X+1}{s+1}.
\]
\end{thm}

The lower bound of Theorem \ref{thm:lowerSP} also generalizes easily. Let $SP^m(X)$ denote the $m$-th symmetric product of $X$, defined as the orbit space of $X^m$ under the stated $\Sigma_m$-action. Denote by $d_m X \subseteq SP^m(X)$ the image of the $m$-fold diagonal $\Delta_m X= \{(x,\ldots , x)\}\subseteq X^m$.

\begin{thm}\label{thm:lowerSPm}
 Suppose there are classes $x_1,\ldots ,x_k\in H^*(SP^m(X))$ (with arbitrary coefficients) such that:
 \be
 \item $x_i$ restricts to zero in $H^*(d_mX)$ for $i=1,\ldots, k$;
 \item $0\neq x_1\cdots x_k \in H^*(SP^m(X))$.
 \ee
 Then $\TC^\Sigma_m(X)\ge k$.
 \end{thm}

There is a \emph{monoidal symmetrized $m$-th higher topological complexity} $\TC^{M,\Sigma}_m(X)$, and the following generalization of Theorem \ref{thm:MSvsS} holds.

\begin{thm}\label{thm:MSvsShigher}
Let $X$ be a paracompact ENR. Then
\[
\TC^{M,\Sigma}_m(X)=\TC^\Sigma_m(X).
\]
\end{thm}
 The key observations here are that the $m$-fold iterated diagonal $\triangle_m:X\to X^m$ is a $\Sigma_m$-cofibration (by Proposition \ref{prop:GLEC}), and that the $\Sigma_m$-fixed points of the fibre of $e_m$ are homeomorphic to $P_0X$, just as in the case $m=2$.

 \begin{cor}\label{cor:lowerMonoidalhigher}
Suppose there are relative classes $x_1,\ldots ,x_k\in H^*(SP^m(X),d_mX)$ (with arbitrary coefficients) such that
$$0\neq x_1\cdots x_k \in H^*(SP^m(X),d_m X).$$ Then $\TC^{M,\Sigma}_m(X)\ge k$.
 \end{cor}

Finally, we discuss the symmetrized higher topological complexity of spheres. By Theorem \ref{thm:upperhigher} we have $\TC^\Sigma_m(S^n)\le m$ for all $m\ge2, n\ge1$. By \cite[\S 4]{Rudyak} we have
\[
\TC_m(S^n) = \left\{\begin{array}{ll} m & n \mbox{ even}, \\ m-1 & n \mbox{ odd}. \end{array}\right.
\]
Since clearly $\TC^\Sigma_m(X)\ge \TC_m(X)$ for any space $X$, it follows that $\TC^\Sigma_m(S^n)=m$ for $n$ even. One might expect that $\TC^{\Sigma}_m(S^n)=m$ for odd $n$ also, by analogy with the case $m=2$. Unlike the case $m=2$ however, this does not seem follow directly from the above lower bounds and Nakaoka's results on the cohomology of symmetric products of spheres.

\end{document}